\newcommand{\rusi}{W}
\newcommand{\rusch}{Q}
\newcommand{\ruszh}{R}
\newenvironment{myItemize}{ 
  \begin{list}{\raisebox{2.2pt}{$\centerdot$}}{%
      \setlength\topsep{5pt}
      \setlength\leftmargin{25pt}
      \setlength\labelwidth{20pt}
      \setlength\itemsep{-2pt}
    }
  }{
  \end{list}
}
\newenvironment{myEnumerate}{ 
  \begin{list}{\labelenumi}{%
      \usecounter{enumi}
      \setlength\leftmargin{25pt}
      \setlength\labelwidth{20pt}
    }
  }{
  \end{list}
}
\newenvironment{myAlphanumerate}{ 
  \begin{list}{\labelenumii}{%
      \usecounter{enumii}
      \setlength\leftmargin{25pt}
      \setlength\labelwidth{20pt}
    }
  }{
  \end{list}
}
\renewcommand{\Cup}{\bigcup}
\renewcommand{\Cap}{\bigcap}
\renewcommand{\a}{\alpha}
\renewcommand{\b}{\beta}
\newcommand{\g}{\gamma}
\renewcommand{\k}{\kappa}
\renewcommand{\l}{\lambda}
\newcommand{\f}{\varphi}
\renewcommand{\o}{\omega}
\newcommand{\s}{\sigma}
\newcommand{\es}{\varnothing}
\newcommand{\A}{\mathcal{A}}
\newcommand{\M}{\mathcal{M}}
\renewcommand{\ll}{\mathscr{L}}
\newcommand{\Q}{\mathbb{Q}} 
\renewcommand{\P}{\mathbb{P}}             
\newcommand{\la}{\langle} 
\newcommand{\ra}{\rangle}
\newcommand{\Po}{\mathcal{P}}             
\newcommand{\ISO}{\operatorname{ISO}}
\newcommand{\Str}{\operatorname{Str}}
\newcommand{\dlo}{{\operatorname{DLO}}}
\newcommand{\clo}{{\operatorname{CLO}}}
\newcommand{\cf}{\operatorname{cf}} 
\newcommand{\hgt}{\operatorname{ht}} 
\newcommand{\CT}{\operatorname{CT}} 
\newcommand{\rest}{\!\restriction\!}
\newcommand{\restl}{\restriction}  
\newcommand{\dom}{\operatorname{dom}}
\newcommand{\ran}{\operatorname{ran}}
 \renewcommand{\le}{\leqslant}  
 \renewcommand{\ge}{\geqslant}  
\newcommand{\sd}{\,\triangle\,}             
 \newcommand{\Sii}{{\Sigma_1^1}}
 \newcommand{\Dii}{{\Delta_1^1}}
 \newcommand{\I}{\mathcal{I}}
 \newcommand{\J}{\mathcal{J}}
 \newcommand{\EF}{\operatorname{EF}} 
 \newcommand{\PlOne}{\,{\textrm{\bf I}}}
 \newcommand{\PlTwo}{\textrm{\bf I\hspace{-1pt}I}}
\newcommand{\Sk}{{\operatorname{Sk}}} 
\newcommand{\ZF}{\operatorname{ZF}}
\newcommand{\ZFC}{\operatorname{ZFC}}  
\newcommand{\Borel}{\operatorname{Borel}}  
\newtheorem*{Thm*}{Theorem}
\newtheorem{Thm}{Theorem}
\newtheorem{Lemma}[Thm]{Lemma}
\newtheorem{Cor}[Thm]{Corollary}
\newenvironment{claim}[1]{\text{ }\vspace{7pt}\newline\noindent\textbf{Claim #1.}}{\hspace*{\fill}}
\newenvironment{claim*}{\vspace{7pt}\noindent\textbf{Claim.}}{}
\theoremstyle{definition}
\newtheorem{Def}[Thm]{Definition}
\newtheorem{Fact}[Thm]{Fact}
\theoremstyle{remark}
\newtheorem*{Remark}{Remark}
\newtheorem{RemarkN}[Thm]{Remark}
\newcommand{\proofvpara}{\text{}}
\newenvironment{proofVOf}[1] {\hfill\vspace{5pt}\\\noindent \textbf{Proof of #1.}\ignorespaces\renewcommand{\proofvpara}{\text{#1}}}
{\nopagebreak\hspace*{\fill}\mbox{$\square_{\,\proofvpara}$}\\\vspace{-8pt}}
\newenvironment{proofV}[1] {\hfill\\\noindent \textbf{Proof.}\ignorespaces \renewcommand{\proofvpara}{\text{#1}}}
{\nopagebreak\hspace*{\fill}\mbox{$\square_{\,\proofvpara}$}\\}
\author{Tapani Hyttinen and Vadim Kulikov}
\title{On $\Sii$-complete Equivalence Relations on the Generalized Baire Space}
\begin{document}

\maketitle

\begin{abstract}
  Working with uncountable structures of fixed cardinality,
  we investigate the complexity of certain equivalence relations and show that
  if $V=L$, then many of them are $\Sii$-complete, in particular the
  isomorphism relation of dense linear orders.

  Then we show that it is undecidable in ZFC whether or not the isomorphism relation of 
  a certain well behaved theory (stable, NDOP, NOTOP) is $\Sii$-complete (it is, if $V=L$,
  but can be forced not to be).
\end{abstract}

\vspace{5pt}

Key words: descriptive complexity, generalized Baire space, stability theory.

\vspace{10pt}

2012 MSC: 03C55, 03E47.

\vspace{10pt}

\section*{Introduction}

The descriptive set theory of the generalized Baire space $\k^\k$ for uncountable $\k$
has been initiated in the 1990's, see for example \cite{MV,Ha}, and developed 
further e.g. in~\cite{FHK}. The theory differs from the classical 
case $\k=\o$ in many respects, but similarly as in classical case there is a strong connection
to model theory.

Let $T$ be a complete countable first-order theory, $\M(T)$ the set of models of $T$
with domain $\k$ and $\ISO(T)$ the isomorphism relation on $\M(T)$. In a standard way $\M(T)$
can be viewed as a Borel subset of $2^\k$.
It was established in \cite{FHK}, that in many cases the descriptive complexity of
$\ISO(T)$
is high if and only if $T$ is ``hard'' in terms of the classification theory developed by Shelah~\cite{Sh}.
For example if the isomorphism can be decided with a relatively short Ehrenfeucht-Fra\"iss\'e-game,
then the isomorphism relation is Borel* (Definition \ref{def:Eka}). This result is obtained
by translating between the EF-game and the Borel*-game which are similar in nature.
On the other hand, if the theory is unclassifiable, then
the equivalence relation on $2^\k$ modulo a certain version of the 
non-stationary ideal can be embedded into its isomorphism
relation. A more robust example are the following two theorems:

\begin{Thm*}[\cite{FHK}]\label{thm:ShallowBorell}
  Assume that $\k^{<\k}=\k>\o$ is not weakly inaccessible and $T$ a complete countable 
  first-order theory. 
  If the isomorphism relation $\cong^\k_T$ is Borel, then $T$ is classifiable 
  (superstable, NDOP and NOTOP) and shallow.
  Conversely, if $\k>2^\o$, then if $T$ is classifiable and shallow, then
  $\cong^\k_T$ is Borel.
\end{Thm*}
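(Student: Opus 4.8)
This is quoted from \cite{FHK}; here is how one establishes it. The statement splits into the non-structure direction (``$\cong^\k_T$ Borel $\Rightarrow$ $T$ classifiable and shallow''), which I would prove by contraposition from Shelah's classification theory together with the combinatorics of the non-stationary ideal on $\k$, and the structure direction (``$T$ classifiable and shallow and $\k>2^\o$ $\Rightarrow$ $\cong^\k_T$ Borel''), which I would prove by reducing isomorphism of models of size $\k$ to an Ehrenfeucht--Fra\"iss\'e game of bounded countable length. Throughout, recall that $\k=\k^{<\k}$ forces $\k$ regular and $\Borel(2^\k)$ closed under Boolean operations of length $\le\k$.

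\emph{The structure direction.} Suppose $T$ is superstable with NDOP and NOTOP and is shallow, say of depth $\a<\o_1$. By Shelah's decomposition theory for such $T$, every model of size $\k$ is prime and minimal over an independent tree of countable models, and its isomorphism type is determined by the ``dimension'' data attached along that tree; shallowness means the tree of relevant types has height $<\o_1$, so this data stabilizes at a bounded countable level. The plan is to use this to show that for $A,B\in\M(T)$ of size $\k$ one has $A\cong B$ iff $\PlTwo\wins\EF^\k_\g(A,B)$, where $\g<\o_1$ depends only on $\a$ (one can take $\g=\o\cdot\a$) --- and this is where $\k>2^\o$ enters: it forces the invariant data of a model of size $\k$ to be captured by the depth-$\g$ game rather than by a game of length $\k$. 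Finally, for a fixed $\g<\o_1\le 2^\o<\k$, the relation ``$\PlTwo\wins\EF^\k_\g(A,B)$'' is Borel: it coincides with the rank-$\g$ back-and-forth relation between $A$ and $B$, which one defines by recursion on $\g$ --- quantifier-free (hence open/closed) at the base, one block of ``$\forall a<\k$'' and ``$\exists b<\k$'' (a $\k$-intersection of $\k$-unions, legal in $\Borel(2^\k)$) per successor step, and a $\k$-sized intersection at limits; after $\g<\k$ steps the set is still Borel. Hence $\cong^\k_T$ is Borel.

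\emph{The non-structure direction.} Suppose $\cong^\k_T$ is Borel and, towards a contradiction, that $T$ is not classifiable, or is classifiable but deep. By the Main Gap dichotomy, $T$ then has at least one of: instability, stable unsuperstability, DOP, OTOP, deepness. In each case Shelah's non-structure machinery --- order-indexed or tree-indexed Ehrenfeucht--Mostowski models --- provides, for a suitable regular $\mu<\k$ (one may take $\mu=\o$ in the unstable and in the unsuperstable cases; the DOP, OTOP and deep cases use the corresponding tree constructions, for possibly different $\mu$), a \emph{continuous} map $S\mapsto\A_S$ sending each $S\subseteq\{\d<\k:\cf(\d)=\mu\}$ to a model $\A_S\in\M(T)$ of size $\k$ with $\A_S\cong\A_{S'}$ if and only if $S\sd S'$ is non-stationary. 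Such a map is a Borel reduction of the $\mu$-club equivalence relation $E^\k_{\mu\text{-club}}$ (equality modulo the restriction of the non-stationary ideal to cofinality $\mu$) into $\cong^\k_T$. So it suffices that, for at least one $\mu$ produced this way, $E^\k_{\mu\text{-club}}$ is not $\Dii$; and that is precisely the point of the hypothesis that $\k=\k^{<\k}$ is not weakly inaccessible --- for such $\k$ there is a regular $\mu<\k$ for which $E^\k_{\mu\text{-club}}$ is in fact $\Sii$-complete, hence not Borel (proved by reducing the canonical $\Sii$-complete set to it via a cub-guessing-style construction on $\{\d<\k:\cf(\d)=\mu\}$), and the available $\mu$'s match. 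Composing, $\cong^\k_T$ is not Borel --- a contradiction.

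\emph{Where the difficulty lies.} The routine parts are the closure of $\Borel(2^\k)$ under $\le\k$-ary Boolean operations, the identification of ``$\PlTwo\wins\EF^\k_\g$'' with a back-and-forth rank, and composition of reductions. The real work is in two ``black boxes''. On the model-theoretic side one must extract from Shelah's classification theory the sharp, uniform statements used above: that classifiable shallow $T$ of depth $\a$ has isomorphism of size-$\k$ models decided by $\EF^\k_{\o\cdot\a}$ (with the use of $\k>2^\o$ made explicit), and that each of the four non-structure properties, and deepness, yields a coding $S\mapsto\A_S$ continuous in $S$ with the stated isomorphism behaviour. On the set-theoretic side the crux is the combinatorial lemma that $E^\k_{\mu\text{-club}}$ fails to be Borel for an appropriate regular $\mu<\k$ whenever $\k$ is not weakly inaccessible; this needs a case analysis on the cardinal arithmetic of $\k$ (successor of a regular versus successor of a singular cardinal) and a genuine construction witnessing $\Sii$-completeness. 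I expect this last lemma, together with checking that the $\mu$'s it supplies are exactly those delivered by the non-structure constructions, to be the main obstacle.
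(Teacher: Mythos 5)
This statement is not proved in the paper at all: it is quoted verbatim from \cite{FHK} as background, so there is no ``paper proof'' to compare against. Judged on its own terms, your outline of the structure direction and of the non-classifiable half of the non-structure direction is broadly the route taken in \cite{FHK} (bounded-rank back-and-forth for classifiable shallow theories; reductions of $E^\k_{\mu\text{-club}}$ into $\cong^\k_T$ via Ehrenfeucht--Mostowski constructions for unstable, unsuperstable, DOP and OTOP theories, combined with the non-Borelness of $E^\k_{\mu\text{-club}}$ for successor $\k$). But there are two genuine problems.

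First, your treatment of the \emph{deep} case is wrong, not merely a black box. You propose to handle ``classifiable but deep'' by the same device as the non-structure properties, i.e.\ a map $S\mapsto\A_S$ reducing $E^\k_{\mu\text{-club}}$ into $\cong^\k_T$. This cannot exist: the second theorem quoted in this very paper says that $T$ is classifiable \emph{iff} $E^\k_\mu\not\le_B\,\cong^\k_T$ for all regular $\mu<\k$, so for a classifiable deep theory no such reduction is available. The actual argument in \cite{FHK} for deep theories is of a different nature: one uses the fact that every Borel set is coded by a well-founded tree of bounded rank, and constructs, for cofinally many $\a<\k^+$, pairs of non-isomorphic models that are equivalent up to back-and-forth rank $\a$; deepness is exactly what makes these unbounded ranks realizable. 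Without this your proof only yields ``Borel $\Rightarrow$ classifiable'', not ``Borel $\Rightarrow$ classifiable and shallow''. Second, you assert in ZFC that $E^\k_{\mu\text{-club}}$ is $\Sii$-complete for a suitable $\mu$; that is an overclaim --- its $\Sii$-completeness is precisely Theorem~\ref{thm:Complete1} of the present paper and is proved only under $V=L$. What \cite{FHK} establish in ZFC, and what the argument actually needs, is the weaker fact that $E^\k_{\mu\text{-club}}$ is not $\Dii$ (in particular not Borel) when $\k$ is a successor cardinal; you should cite or prove that statement instead.
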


\begin{Thm*}[\cite{FHK}]
  Suppose $\k=\l^+=2^\l>2^\o$ where $\l^{<\l}=\l$. Let
  $T$ be a first-order theory. Then $T$ is classifiable if and only if for all regular $\mu<\k$,
  $E^\k_\mu\not\le_B\,\cong_T$, where $E^\k_\mu$ is the equivalence on $2^\k$
  modulo the ideal of not $\mu$-stationary sets.
\end{Thm*}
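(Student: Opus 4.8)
The plan is to prove the equivalence by establishing the two implications separately, with the model-theoretic dividing lines carrying the argument in each direction.

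For the implication ``$T$ not classifiable $\Rightarrow$ $E^\k_\mu\le_B\,\cong_T$ for some regular $\mu<\k$'' I would argue by cases according to why $T$ fails to be classifiable: (i) $T$ is unstable; (ii) $T$ is stable but not superstable; (iii) $T$ is superstable with DOP; (iv) $T$ is superstable, NDOP, with OTOP. In each case the goal is to build, from a subset $X\subseteq S^\k_\mu$ for an appropriate regular $\mu$, a model $\A_X\models T$ of size $\k$ so that the assignment $X\mapsto\A_X$ is continuous (hence Borel) after fixing codings of $2^\k$ and of $\M(T)\subseteq 2^\k$, and so that $\A_X\cong\A_Y$ holds precisely when $(X\triangle Y)\cap S^\k_\mu$ is nonstationary; extending by $\A_X:=\A_{X\cap S^\k_\mu}$ then yields the required Borel reduction $E^\k_\mu\le_B\,\cong_T$. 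These are the ``non-structure'' constructions of Shelah's classification theory and of its refinements by Hyttinen--Tuuri and Hyttinen--Shelah: in case (i) one codes $X$ into an Ehrenfeucht--Mostowski-style model built over a linear order carrying a predicate for a club-guessing sequence indexed by $S^\k_\mu$; in cases (iii) and (iv) one threads the coding through a tree of countable models, using the DOP-, resp.\ OTOP-, configuration to hang dimension data that recovers $X$; case (ii) is the delicate one, where one is forced to take $\mu=\o$ and to realize $\A_X$ as a union of $\o$-chains whose ``height function'' codes $X\subseteq S^\k_\o$, so that the only isomorphisms available are those induced by re-indexing along a club. The cardinal hypotheses $\k=\l^+=2^\l>2^\o$ with $\l^{<\l}=\l$ enter to guarantee the combinatorial room ($2^\l=\k$ many codes, suitable club-guessing on $S^\k_\mu$) these constructions need; checking that $X\mapsto\A_X$ is Borel is routine from their explicitness.

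For the converse I would prove the contrapositive: if $E^\k_\mu\le_B\,\cong_T$ for some regular $\mu<\k$, then $T$ is not classifiable. The key input is the ``positive'' side of the main gap: if $T$ were classifiable (superstable, NDOP, NOTOP), then by Shelah's decomposition every model of $T$ of size $\k$ is prime and minimal over a tree of countable models, and isomorphism of two such models reduces to isomorphism of the associated trees-with-dimension-functions; translating this into Ehrenfeucht--Fra\"iss\'e games in the style of Hyttinen--Shelah shows that both $\cong_T$ and its complement admit $\Sii$ definitions, i.e.\ $\cong_T$ is $\Dii$. On the other hand, under the stated cardinal arithmetic $E^\k_\mu$ is not $\Dii$ for any regular $\mu<\k$: it is plainly $\Sii$ (``there is a club of $\k$ disjoint from the $\mu$-part of the symmetric difference''), while a $\Pii$ normal form would give a uniform recipe for producing such clubs, which a pressing-down/diagonalization argument against the purported normal form refutes. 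Since the preimage of a $\Dii$ set under a Borel function is again $\Dii$ (the graph of a Borel function is $\Sii$, so $f^{-1}$ and $(f\times f)^{-1}$ preserve $\Sii$, hence $\Dii$), a Borel reduction $f$ with $E^\k_\mu=(f\times f)^{-1}(\cong_T)$ would force $E^\k_\mu\in\Dii$, a contradiction. Hence $T$ is not classifiable, which is the contrapositive of ``$T$ classifiable $\Rightarrow$ $E^\k_\mu\not\le_B\,\cong_T$''.

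The main obstacle sits in the converse, and specifically in the black-box fact that classifiability implies $\cong_T\in\Dii$: the $\Sii$-definability of $\cong_T$ is easy, but the $\Sii$-definability of \emph{non}-isomorphism is essentially the whole content of the structure theory for classifiable theories at $\k$, requiring one to extract from the decomposition a uniformly definable complete invariant and to control the Ehrenfeucht--Fra\"iss\'e game that witnesses non-isomorphism. The input ``$E^\k_\mu\notin\Dii$'' is comparatively standard but still uses the cardinal hypotheses. The non-structure direction, by contrast, contains no single deep idea but is laborious, and case (ii) (stable, not superstable) in particular demands care to pin the reduction down to $E^\k_\o$ exactly rather than to a coarser relation.
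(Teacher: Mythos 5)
This theorem is quoted from \cite{FHK} as background and is not proved in the present paper, so there is no in-paper proof to compare against; your outline does, however, match the proof given in \cite{FHK}: the non-structure direction by cases (unstable, stable unsuperstable, superstable with DOP, superstable with OTOP) via constructions coding stationary subsets of $S^\k_\mu$ into models with $\mu=\l$ or $\mu=\o$ as appropriate, and the structure direction via the facts that classifiability makes $\cong_T$ a $\Dii$ relation (Shelah's theorem that $\equiv_{\infty\k}$ implies isomorphism for classifiable $T$, plus determinacy of the associated EF game), that $E^\k_\mu$ is not $\Dii$, and that Borel preimages preserve $\Dii$. The one caveat is that everything of substance is black-boxed, and the single justification you do sketch --- deriving $E^\k_\mu\notin\Dii$ by a ``pressing-down/diagonalization against a $\Pii$ normal form'' --- is not an argument that works as stated and is not how \cite{FHK} proves it (there it goes through a forcing/absoluteness argument for $\Dii$ sets, which is where the hypotheses $\k^{<\k}=\k$ and $\l^{<\l}=\l$ are used).
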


Thus, the vague thesis of \cite{FHK} is that the more complex the theory is according to 
classification theory, the more complex is its isomorphism relation in terms of 
the generalized descriptive set theory at some fixed cardinal $\k$. In this paper we show
that if $V=L$, then there is a counter example to this thesis: 
the theory $T_{\o+\o}$ (see Definition~\ref{def:Too}) is stable with no DOP nor OTOP, its
isomorphism relation can be decided by an EF-game of relatively short length and
its isomorphism relation is $\Sii$-complete (being Borel* at the same time). 
In order to do that, we investigate also other $\Sii$-complete equivalence relations on $\k^\k$
for $\k^{<\k}=\k>\o$ and meanwhile show that the isomorphism relation of dense linear orderings
is $\Sii$-complete, if $V=L$ (without $V=L$ we still get that $\ISO(\k,\dlo)$ is $S_\k$-complete).

Then we show also that the same cannot be proven in ZFC, i.e. in a certain forcing extension $T_{\o+\o}$ is
not even $S_\k$-complete, Corollary~\ref{cor:StableNotCom}.

\paragraph{Acknowledgment.} We wish to thank Sy-David Friedman for the useful discussions we had during the preparation of this paper.
The research was partially supported by the Academy of Finland through its grant WBS 1251557 and the second author
was funded by the Science Foundation of the University of Helsinki.

\section{Some $\Sii$-complete Equivalence Relations in $L$}

In this section we give definitions and show that if $V=L$, then many equivalence relations,
such as the equivalence on $\l^\k$ modulo the non-stationary ideal and the isomorphism relation 
of dense linear orders, are $\Sii$-complete.

\begin{Def}
  We fix an uncountable cardinal $\k$ with the property $|\k^{<\k}|=|\Cup_{\a<\k}\k^\a|=\k$. 
  We use the notation $\a^{<\b}$ to denote both the set of functions from the initial segments of $\b$ to $\a$
  and the cardinality $|\a^{<\b}|$, $\a,\b$ ordinals.
  Our basic space is $\k^\k$, all functions from $\k$ to $\k$, with the topology generated by 
  $$N_p=\{\eta\in \k^\k\mid \eta\supset p\}\quad p\in \k^{<\k}.$$
  This is the generalized Baire space.
  Often we deal with the closed subspaces of $\k^\k$ such as $2^\k$ and $\l^\k$ with $\l<\k$ an infinite
  cardinal. Then the topology on them is the relative subspace topology.
  We fix a one-to-one coding between the models of a fixed countable vocabulary with the universe $\k$ and 
  elements of $2^\k\subset \k^\k$:
  $$\eta\in 2^\k\iff \A_\eta\text{ is a model with }\dom A_\eta=\k.$$
  More precisely, let $\ll$ be a countable relational vocabulary, $\ll=\{R_n\mid n<\o\}$
  and let $\#R_n$ be the arity of $R_n$. Let $\pi\colon \Cup_{n<\o}\{n\}\times\k^{\#R_n}\to\k$
  be a bijection. Given a function $\eta\in 2^\k$, let
  $\A_\eta$ be the structure such that $\dom \A_\eta=\k$ and 
  $$\A_\eta\models R_n(\a_1,\ldots,\a_{\#R_n})\iff \eta(\pi(n,\a_1,\ldots,\a_{\#R_n}))=1.$$
  This is clearly bijective and in some sense continuous -- the further $\eta$ is known the
  larger segment of the model is determined and vice versa.

  The collection of \emph{Borel} sets is the smallest collection of subsets of $\k^\k$ such that: 
  \begin{myItemize}
  \item closed sets are Borel,
  \item if $(A_i)_{i<\k}$ is a sequence of Borel sets, then $\Cup_{i<\k}A_i$, $\Cap_{i<\k}A_i$ and $\k^\k\setminus A_0$
    are Borel.
  \end{myItemize}
  A function $X\to Y$, $X,Y\subset\k^\k$, is \emph{Borel}, if the inverse image of every open set is Borel.

  An equivalence relation $E$ on $X\subset \k^\k$ is \emph{Borel reducible} to an equivalence relation 
  $E'$ on $Y\subset\k^\k$, if there is a Borel function
  $f\colon X\to Y$ such that $\eta E\xi\iff f(\eta)E' f(\xi)$.

  The coding of models to elements of $2^\k$
  can be extended to $\l^\k$ ($\l>2$) via the continuous 
  surjection $\eta\mapsto \xi$, $\xi(\a)=0\iff\eta(\a)=0$, for $\eta\in\l^\k$ and
  $\xi\in 2^\k$.
\end{Def}

A set $A\subset\k^\k$ is $\Sii$, if it is the projection of a closed or Borel set $C\subset \k^\k\times\k^\k$.
It is~$\Dii$, if both $A$ and its complement are~$\Sii$.

The following definition of $\Borel^{*}(\k)$ sets
is from \cite{Bl} in the case $\k =\o$
and from \cite{MV} in the case $\k$ is uncountable.

\begin{Def}\label{def:Eka}
  Let $\a\le\k$ be an ordinal and $\l\le\k$ a cardinal.
  \begin{myEnumerate}
  \item We say that a tree $t$ is a $\k^{+},\a$-tree
    if does not contain chains of order-type $\a$ and
    every element has at most $\k$ successors.
  \item We say that a pair $(t,f)$ is a $\Borel^{*}_{\l}$-code
    if $t$ is a closed $\k^{+},\l$-tree and $f$ is a function
    with domain $t$ such that if $x\in t$ is a leaf, then
    $f(x)$ is a basic open set and otherwise
    $f(x)\in\{\cup ,\cap\}$.
  \item For an element $\eta\in \k^\k$ and
    a $\Borel^{*}_{\l}(\k)$-code $(t,f)$, the $\Borel^{*}$-game
    $B^{*}(\eta ,(t,f))$ is played as follows.
    There are two players, $\PlOne$ and $\PlTwo$. The game
    starts from the root of $t$. At each move,
    if the game is at node $x\in t$ and $f(x)=\cap$,
    then $\PlOne$ chooses an immediate successor $y$ of $x$
    and the game continues from this $y$. If $f(x)=\cup$,
    then $\PlTwo$ makes the choice.
    At limits the game continues from the (unique)
    supremum of the previous moves.
    Finally, if $f(x)$ is a basic open set,
    then the game ends, and $\PlTwo$ wins if $\eta\in f(x)$.
  \item We say that $X\subseteq \k^\k$ is a $\Borel^{*}_{\l}(\k)$ set
    if it has a $\Borel^{*}_{\l}(\k)$-code
    i.e. that there is a $\Borel^{*}_{\l}(\k)$-code
    $(t,f)$ such that for all $\eta\in \k^\k$,
    $\eta\in X$ iff $\PlTwo$ has a winning strategy in the game
    $B^{*}(\eta ,(t,f))$.
  \item In this paper we have fixed an uncountable cardinal $\k$ and
    we will drop $\k$ from the notation, i.e. $\Borel^*=\Borel^*(\k)$
    and we write $\Borel^{*}$ also for the family of all $\Borel^{*}$ sets.
  \end{myEnumerate}
\end{Def}

\begin{Def}
  Given a class $M$ of structures with domain $\k$, let $C(M)\subset 2^\k$ be the set of codes of elements of $M$.
  If $M$ is closed under isomorphism, denote by $\ISO(M)$ the isomorphism relation on $C(M)$.
  If $M=\Str^\k(T)=\{\A\mid \dom\A=\k\land \A\models T\}$ for some first order theory $T$, then
  denote $\ISO(\k,T)=\ISO(M)$. For a first order theory $T$, $C(\Str^\k(T))$ is Borel and
  the equivalence relation $\ISO(\k,T)$ is $\Sii$. We denote the class 
  $\{\ISO(M)\mid C(M)\text{ is Borel}\}$ by $S_\k$. The notation might be a bit confusing, since in some 
  contexts $S_\k$ denotes the group of all permutations of $\k$, but 
  note that every equivalence relation in $S_\k$ (as defined above) is induced by the action of this group.
  We choose this definition, because we will look at the class $S_\k$ as a part of the hierarchy, see below.
\end{Def}

\begin{Def}
  Given a collection of sets $\Gamma$ we say 
  that an equivalence relation $E$ on $X\subset \k^\k$ is
  \emph{$\Gamma$-complete}, if it is itself in $\Gamma$ and for every 
  equivalence relation $F\in \Gamma$ on some $Y\subset \k^\k$ there is a Borel reduction
  $F\le_B E$.

  We consider mainly $\Gamma\in\{\Borel,\Dii,\Borel^*,\Sii,S_\k\}$.
\end{Def}

Let $\dlo$ be the theory of dense linear orderings without end points.
Here is the list of results of this article:
\begin{itemize}
\item ($V=L$, $\k=\l^+$ or $\k=\aleph_\k=\l$, $\mu=\cf(\mu)<\k$) 
  The equivalence on $\l^\k$ modulo the $\mu$-non-stationary ideal is $\Sii$-complete. (Theorem~\ref{thm:Complete1})
\item ($V=L$, $\k=\l^+$, $\cf(\l)=\l$) $\ISO(\k,\dlo)$ is $\Sii$-complete. (Theorem~\ref{thm:Complete3})
\item ($\ZFC$, $\k^{<\k}=\k$) $\ISO(\k,\dlo)$ is $S_\k$-complete. (Theorem~\ref{thm:Complete4}).
\item ($V=L$, $\k=\l^+$, $\l^\o=\l$) There is a stable NDOP, NOTOP theory $T$ whose models of size $\k$ can 
  be characterized up to isomorphism by an EF-game of length $\l\cdot(\o+\o+1)$ (in particular $\ISO(\k,T)$ is $\Borel^*$ and
  $T$ is the theory of $\o+\o$ equivalence relations
  refining each other) such that $\ISO(\k,T)$ is $\Sii$-complete. (Corollary \ref{thm:Stable1}).
\item ($\k=\k^{<\k}=\l^+$, $\l^{<\l}=\l$) 
  It can be forced with a $<\k$-closed $\k^+$-c.c. forcing that 
  $\ISO(\k,T)$ for the above stable theory $T$ is not $S_\k$-complete, in fact
  $\ISO(\k,\dlo)$ is not reducible to it. (Corollary~\ref{cor:StableNotCom})
\end{itemize}


Most of the discussion in the following few pages is within $\ZFC+V=L$
(we mention it every time though).
In this theory, there is a $\Sigma_1$-formula $\f_{\le}(x,y)$ which provably
defines a well-ordering of the universe (``$\f_{\le}(x,y)\iff x\le y$'' \cite[Ch. 13]{Jech}).
By $\min_L A$ we mean the least element of $A$ in this ordering.
If $A\subset L_\theta$ is a subset of the model $L_{\theta}$ for some limit
ordinal $\theta$, then $\Sk(A)^{L_\theta}$ is the Skolem closure of
$A$ in~$L_{\theta}$ under the definable (from $\f_\le$) Skolem functions \cite[Ex. 13.24]{Jech}. 
Note that this Skolem closure $\Sk(A)^{L_\theta}\subset L_{\theta}$ is definable in $V$.
By $\ZF^-$ we mean $\ZFC+(V=L)$ without the power set axiom. 
If $\mu<\k$ is regular, then by $S^\k_\mu$ we denote all the $\mu$-cofinal ordinals less than $\k$.

\begin{Lemma}[\cite{FHK}]\label{lemma:Fri}
  Assume $V=L$. Suppose $\psi(x,\xi)$ is a $\Sigma_1$-formula in set theory with parameter $\xi\in 2^\k$
  and that $r(\a)$ is a formula of set theory that says that ``$\a$ is a regular cardinal''.
  Then for $x\in 2^\k$ we have $\psi(x,\xi)$ if and only if the set
  $$A=\{\a<\k\mid \exists \b>\a (L_{\b}\models \ZF^-\land \,\psi(x\rest\a,\xi\rest\a)\land r(\a))\}$$
  contains a cub. 
  
  Moreover ``cub'' can be replaced by $\mu$-cub for any regular $\mu<\k$.
\end{Lemma}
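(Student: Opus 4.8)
The plan is to derive this from the standard condensation/reflection behavior of $L$, following the usual ``$\Sigma_1$ over $L_\k$ reflects to a club of initial segments'' argument, and then to handle the parameter $\xi$ and the regularity clause $r(\a)$ separately. First I would recall that since $V=L$ and $\k$ is a cardinal with $\k=\k^{<\k}$, we have $L_\k = H(\k)$ and a $\Sigma_1$ statement $\psi(x,\xi)$ with real parameters $x,\xi\in 2^\k$ is absolute between $V$ and $L_\k$ (indeed between $L_\k$ and any $L_\b$ with $\b$ large enough to contain a witness); this is where $V=L$ is used, so that a $\Sigma_1$ truth is witnessed inside some $L_\b$ rather than merely in $V$. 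Thus $\psi(x,\xi)$ holds iff $L_\b\models\psi(x,\xi)$ for some (equivalently all sufficiently large) $\b<\k^+$, and in fact for club-many $\b<\k$ after we take initial segments of $x,\xi$.

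The key step is the Skolem-hull construction. Assuming $\psi(x,\xi)$, fix $\b_0$ with $L_{\b_0}\models\ZF^-\land\psi(x,\xi)$. For a club of $\a<\k$ (closing under the definable Skolem functions of $L_{\b_0}$ and under the obvious bookkeeping so that $\Sk(\a\cup\{x\restl\a,\xi\restl\a,\dots\})^{L_{\b_0}}$ has ordinal part exactly $\a$), the transitive collapse of the Skolem hull $\Sk(\a\cup\{x,\xi\})^{L_{\b_0}}$ is, by condensation, some $L_{\b}$ with $\b>\a$; the collapse fixes $\a$ and sends $x$ to $x\restl\a$, $\xi$ to $\xi\restl\a$, and preserves $\ZF^-$ and $\psi$, giving $L_\b\models\ZF^-\land\psi(x\restl\a,\xi\restl\a)$. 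To also get $r(\a)$ one intersects with the club of $\a$ that are regular in $L$ — here one must be slightly careful that ``$\a$ is regular'' is being evaluated correctly; since we work in $L$, the regular cardinals below $\k$ form a club-in-the-relevant-sense set, and moreover for the ``$\mu$-cub'' strengthening we instead intersect with $S^\k_\mu$-style closure to land inside a $\mu$-club, using that the hulls can be taken continuous and of size $<\k$ so that their ordinal parts trace out a $\mu$-club when we only close at cofinality-$\mu$ stages. This direction shows $\psi(x,\xi)\Rightarrow A$ contains a (($\mu$-)cub).

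Conversely, if $A$ contains even a single $\a$ with the stated property, pick $\b>\a$ with $L_\b\models\ZF^-\land\psi(x\restl\a,\xi\restl\a)$; then $\psi(x\restl\a,\xi\restl\a)$ is a true $\Sigma_1$ statement (upward absolute out of $L_\b$ into $V$), and since $\psi$ is $\Sigma_1$ it is preserved when we extend the parameters $x\restl\a\subseteq x$, $\xi\restl\a\subseteq\xi$ — more precisely one wants $\psi$ to be the kind of $\Sigma_1$ formula for which knowing an initial segment of the real suffices, which is exactly how the lemma is applied (the witness in $L_\b$ only ever saw $x\restl\a$), so $\psi(x,\xi)$ holds in $V$. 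Hence ``$A$ contains a cub'' (indeed ``$A\neq\es$'', but the club is what one gets in the forward direction) is equivalent to $\psi(x,\xi)$.

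The main obstacle I expect is bookkeeping rather than conceptual: ensuring the Skolem hulls collapse to $L_\b$ with $\b$ strictly above $\a$ and with the ordinal part of the hull being exactly $\a$ (so that the collapse acts as claimed on $x,\xi$), and verifying that this can be arranged simultaneously for club-many — and then $\mu$-club-many — $\a$. This is the standard argument behind Friedman's diamond-style characterizations, so I would cite \cite{FHK} for the details and only indicate the $\mu$-club refinement, which comes from taking an increasing continuous chain of hulls of size $<\k$ and closing off at stages of cofinality $\mu$.
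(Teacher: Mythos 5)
Your forward direction is essentially the paper's argument (Skolem hulls $\Sk(\a\cup\{\k,\xi,x\})^{L_\theta}$ inside a large $L_\theta\models\ZF^-\land r(\k)\land\psi(x,\xi)$, condensation, the club of $\a$ with $H(\a)\cap\k=\a$), though one detail is off: you say you ``intersect with the club of $\a$ that are regular in $L$,'' but the regular cardinals below $\k$ are not club (they are bounded when $\k=\l^+$), and $r(\a)$ is anyway evaluated \emph{inside} $L_\b$, not in $V$. The clause $r(\a)$ comes for free because the collapse of $H(\a)$ sends $\k$ to $\a$ and $L_\theta\models r(\k)$.

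The converse direction, however, has a genuine gap that misses the entire content of the lemma. You argue that a single $\a\in A$ already yields $\psi(x,\xi)$, via ``$\psi(x\restl\a,\xi\restl\a)$ is true and $\Sigma_1$ formulas are preserved when we extend the parameters.'' That is false: $\Sigma_1$ upward absoluteness concerns end-extensions of models, not replacing the parameter $x\restl\a$ by a different set $x\supsetneq x\restl\a$. You half-acknowledge this by restricting to formulas ``for which knowing an initial segment suffices,'' but the lemma is stated, and used (to code an arbitrary $\Sii$ equivalence relation in Theorem~\ref{thm:Complete1}), for arbitrary $\Sigma_1$ formulas; for those, $A$ can easily be nonempty while $\psi(x,\xi)$ fails. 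The actual argument must show that when $\lnot\psi(x,\xi)$, the set $A$ fails to contain any $\mu$-cub $C$: one forms hulls $K(\a)=\Sk(\a\cup\{\k,C,\xi,x\})^{L_\theta}$ inside a large $L_\theta\models\lnot\psi(x,\xi)$, takes $\a_0$ to be the \emph{least} $\mu$-limit of the diagonal club $D$, and shows that any $\b>\a_0$ witnessing $\a_0\in A$ must exceed the collapse ordinal $\bar\b$ of $K(\a_0)$; then $D\cap\a_0\in L_\b$, and since $D\cap\a_0$ has order type $\mu<\a_0$ and $L_\b\models\ZF^-$, the model $L_\b$ sees that $\a_0$ is singular, contradicting $r(\a_0)$. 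This is precisely where the hypothesis $r(\a)$ earns its keep, and it is absent from your proposal.
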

\begin{proof}
  Due to the length of \cite{FHK} the proof of this lemma was only sketched there, 
  so we give it here in detail.

  Suppose that $x\in 2^\k$ is such that $\psi(x,\xi)$ holds. 
  Let $\theta$ be a large enough cardinal such that
  $$L_\theta\models (\ZF^-\land \, r(\k)\land \psi(x,\xi)).$$
  For each $\a<\k$, let
  $$H(\a)=\Sk(\a\cup\{\k,\xi,x\})^{L_\theta}$$
  and $\overline{H(\a)}$ the Mostowski collapse of $H(\a)$.
  Let $$D=\{\a<\k\mid H(\a)\cap\k = \a\}.$$
  It is easy to see that $D$ is a cub set. On the other 
  hand $D\subset A$ where $A$ is as in the statement of the theorem, because each $H(\a)$ is an elementary submodel of
  $L_{\theta}$ and the Mostowski collapse $\overline{H(\a)}$ is equal to some $L_{\b}$ with $\b>\a$.
  Of course a cub set is a $\mu$-cub set for any regular $\mu<\k$.

  Suppose $x\in 2^\k$ is such that $\psi(x,\xi)$ does not hold. Similarly as above, let
  $\theta$ be a large enough cardinal such that
  $$L_\theta\models (\ZF^-\land\, r(\k)\land \lnot\psi(x,\xi))$$
  and let $C$ be a $\mu$-cub set for some regular $\mu<\k$. 
  We are going to show that $C\setminus A\ne\es$.
  Let 
  $$K(\a)=\Sk(\a\cup\{\k,C,\xi,x\})^{L_\theta}\text{ and }D=\{\a\in S^\k_\mu\mid K(\a)\cap\k=\a\}.$$
  Clearly $D$ is $\mu$-cub. Let $\a_0$ be the least ordinal in
  $\lim_\mu D$ (the set of $\mu$-cofinal limits of elements of~$D$).  
  Then we have $\a_0\in C$ by the elementarity of each $K(\a)$ and
  $$\a_0>\mu.\eqno(*)$$
  Let $\bar\b$ be
  the ordinal such that $L_{\bar\b}$ is equal to $\overline{K(\a_0)}$, the Mostowski
  collapse of $K(\a_0)$. We will show that $\a_0\notin A$ which
  completes the proof.
  Suppose on contrary, that $\a_0\in A$. 
  Then there exists $\b>\a_0$ such that
  $$L_{\b}\models \ZF^-\land\, \psi(x\rest\a_0,\xi\rest\a_0)\land r(\a_0).\eqno(**)$$ 
  This $\b$ must be a limit ordinal greater than $\bar\b$, because
  $L_{\bar\b}\models \lnot \psi(x\rest\a_0,\xi\rest\a_0)$ and $\psi$ is~$\Sigma_1$. 

  As discussed before the lemma, $K(\a)$ is a definable subset of $L_\theta$ and in fact the definition
  depends only on finitely many parameters one of which is $\a$, so also $D$ is a definable subset of $L_\theta$.
  Therefore by elementarity, $D\cap\a_0$ is a definable subset of $K(\a_0)$ and so $D\cap\a_0$ is a definable
  subset of $L_{\bar\b}$. Thus $D\cap\a_0\in L_\b$ by the definition of the $(L_\a)$-hierarchy.

  Now $L_{\b}$ satisfies $\ZF^-$ and so it satisfies 
  \begin{center}
  ``there exists a $\g\le \a_0$ and an order-preserving bijection from $\g$ to $D\cap \a_0$''.    
  \end{center}
  But there is only one such map and its domain is $\mu$, since the order-type of $D\cap\a_0$ is
  $\mu$ by the definition of $\a_0$. Hence by $(*)$ $\a_0$ is singular in $L_{\b}$ 
  which is a contradiction with~$(**)$ and the definition of~$r(\a)$.  
\end{proof}

\begin{RemarkN}\label{rem:LemmaStat}
  The following version of the lemma above can be also proved (still under $V=L$): 
  for any $\Sigma_1$-formula $\f(\eta,x)$ with parameter $x\in 2^\k$, a regular $\mu<\k$ and a stationary
  set $S\subset S^\k_\mu$, the following are equivalent for all $\eta\in 2^\k$:
  \begin{enumerate}
  \item $\f(\eta,x)$
  \item $S\setminus A$ is non-stationary, where 
    $$A=\{\a\in S\mid \exists \b>\a(L_\b\models \f(\eta\rest\a,x\rest\a)\land r(\a)\land s(\a))\},$$
    where $s(\a)$ states that $S\cap \a$ is stationary and
    $S\cap\a\subset S^\a_\mu$ in the sense that we require $\b$ to be large enough to witness
    that every element of $S\cap\a$ has cofinality $\mu$.
  \end{enumerate}
  Then the proof goes in the similar way except that we take $\a_0$ to be the smallest element
  of $(\lim_\mu D)\cap S$ instead of just $\lim_\mu D$
  and derive the contradiction in the same fashion as in the above proof
  but this time using the fact that $S$ has a lot of $\mu$-cofinal ordinals common with $D$ from
  the point of view of $L_{\b}$ which is a contradiction with the minimality of $\a_0$
  (here this $\b>\a_0$ is defined as in the proof of Lemma~\ref{lemma:Fri} to witness the counter assumption
  that~$\a_0\in A$).
\end{RemarkN}


\begin{Thm}[$V=L$]\label{thm:Complete1}
  Let $\k^{<\k}=\k>\o$. If $\k=\l^+$, let $\theta=\l$ and if $\k$ is inaccessible, let $\theta=\k$.
  Let $\mu<\k$ be a regular cardinal.
  Then the equivalence relation on $\theta^\k$ defined by 
  $$\eta\sim\xi\iff \{\a<\k\mid \eta(\a)=\xi(\a)\}\text{ contains a }\mu\text{-cub}$$
  is $\Sii$-complete.
\end{Thm}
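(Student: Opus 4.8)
The plan is to show two things: first that the equivalence relation $\sim$ (call it $E^\k_\mu(\theta)$) is itself $\Sii$, and second — the substantive part — that every $\Sii$ equivalence relation $F$ on some Borel $Y\subseteq\k^\k$ Borel-reduces to it. The first part is routine: "$\{\alpha<\k\mid\eta(\alpha)=\xi(\alpha)\}$ contains a $\mu$-cub" is expressible by quantifying existentially over a function $\k\to\k$ coding a club of order type determined cofinally and checking a closed condition, so $\sim$ is the projection of a closed set, hence $\Sii$.

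For the completeness part, the key tool is Lemma~\ref{lemma:Fri} (and its variant, Remark~\ref{rem:LemmaStat}). Given a $\Sii$ equivalence relation $F$ on $Y$, write $\eta\mathrel{F}\xi$ as $\exists \zeta\in\k^\k\,\psi_0(\eta,\xi,\zeta)$ for a closed, hence $\Sigma_1$-over-$L$-with-parameter, condition; more carefully, since we are in $L$, "$\eta\mathrel{F}\xi$" is itself $\Sigma_1$ in the parameters $\eta,\xi$ and a fixed real parameter $p$ coding $F$. First I would fix, for each $\eta\in\k^\k$, a canonical pairing so that the statement $\neg(\eta\mathrel{F}\xi)$ is $\Pi_1$; the symmetry and transitivity of $F$ will be used to arrange the reduction consistently. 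The idea is to define $f\colon Y\to\theta^\k$ so that the "difference set" $\{\alpha\mid f(\eta)(\alpha)\ne f(\xi)(\alpha)\}$ is non-$\mu$-stationary exactly when $\eta\mathrel{F}\xi$ holds. Concretely, I would let $f(\eta)(\alpha)$ record, in a uniform $L$-definable way, the least $\beta$ (if any) below a suitable bound such that $L_\beta\models\ZF^-\land r(\alpha)$ together with a "local" decision about $F$ restricted to $\eta\rest\alpha$ against a bookkeeping of all other structures; equivalently, $f(\eta)(\alpha)$ encodes the $\Sk$-hull data $\Sk(\alpha\cup\{\k,\eta,p\})^{L_\theta}$ transitively collapsed. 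Then by Lemma~\ref{lemma:Fri} applied to the $\Sigma_1$-formula "$\eta\mathrel{F}\xi$", the set of $\alpha$ where the local hulls of $\eta$ and $\xi$ agree and jointly witness $F$ contains a $\mu$-cub iff $\eta\mathrel{F}\xi$; and on the failure side, the Remark-style argument shows the agreement set omits a $\mu$-cub, i.e. is non-$\mu$-stationary on a $\mu$-cub complement — which is exactly what makes $f(\eta)\not\sim f(\xi)$.

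The technical heart, and the main obstacle, is getting the reduction $f$ to be \emph{well-defined as a function of a single variable} while the condition it must track, $F$, is binary: the difference set of $f(\eta)$ and $f(\xi)$ must be controlled purely by the restrictions $\eta\rest\alpha$, $\xi\rest\alpha$, so $f(\eta)(\alpha)$ cannot "know" $\xi$. The standard device — already implicit in how \cite{FHK} handles such reductions — is to let $f(\eta)(\alpha)$ be (a code for) the transitive collapse of $\Sk(\alpha\cup\{\k,\eta,p\})^{L_\theta}$ truncated to have size $<\theta$, so that $f(\eta)(\alpha)=f(\xi)(\alpha)$ forces the two collapses to be literally the same $L_\beta$-structure, inside which one can read off whether $F(\eta\rest\alpha,\xi\rest\alpha)$ "looks true". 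One must check: (i) $f$ is Borel (indeed the value $f(\eta)\rest\alpha$ depends only on $\eta\rest\alpha'$ for some $\alpha'$ slightly bigger, by the usual closure argument, giving continuity on a club and Borelness overall); (ii) $f(\eta)(\alpha)$ genuinely lands in $\theta$ — this is where $\k=\l^+$ with $\theta=\l$, or $\k$ inaccessible with $\theta=\k$, is used, since the hulls have size $<\k$ and can be coded below $\theta$; (iii) reflexivity $f(\eta)\sim f(\eta)$ is trivial, and the equivalence $\eta\mathrel{F}\xi\iff f(\eta)\sim f(\xi)$ splits into the two Lemma~\ref{lemma:Fri} directions above.

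I would organize the write-up as: (1) $\sim$ is $\Sii$; (2) fix $F\in\Sii$ on $Y$, express it as $\Sigma_1(\eta,\xi,p)$; (3) define $f$ via truncated Skolem hulls of $L_\theta$; (4) verify $f$ is Borel and $\theta$-valued; (5) apply Lemma~\ref{lemma:Fri} in the positive direction to get a $\mu$-cub of agreement when $F$ holds; (6) apply the Remark~\ref{rem:LemmaStat} argument (taking $\alpha_0$ minimal in $\lim_\mu D$) to get non-$\mu$-stationarity of agreement — hence $f(\eta)\not\sim f(\xi)$ — when $F$ fails. Step (6), pushing the $L$-combinatorics through so that a local witness at $\alpha_0$ contradicts minimality, is the delicate point and will mirror the contradiction derivation at the end of the proof of Lemma~\ref{lemma:Fri}.
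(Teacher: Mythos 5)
There is a genuine gap at the heart of your construction, namely in the definition of the reduction $f$. You propose to let $f(\eta)(\a)$ be (a code for) the transitive collapse of $\Sk(\a\cup\{\k,\eta,p\})^{L_\theta}$. But this object \emph{determines} $\eta\rest\a$: on the cub of $\a$ with $\Sk(\a\cup\{\k,\eta,p\})^{L_\theta}\cap\k=\a$, the collapse of the parameter $\eta$ is exactly $\eta\rest\a$, so $f(\eta)(\a)=f(\xi)(\a)$ forces $\eta\rest\a=\xi\rest\a$. Hence the agreement set of $f(\eta)$ and $f(\xi)$ is contained in $\{\a\mid\eta\rest\a=\xi\rest\a\}$ and contains a $\mu$-cub only when $\eta=\xi$; your $f$ would therefore fail to send distinct but $F$-equivalent points to $\sim$-equivalent images. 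Your own diagnosis of ``the technical heart'' --- that $f(\eta)(\a)$ cannot know $\xi$ and yet must agree with $f(\xi)(\a)$ exactly when $F$ holds --- is correct, but the Skolem-hull device does not resolve it, and the phrase about reading off ``whether $F(\eta\rest\a,\xi\rest\a)$ looks true'' inside a single hull quietly reintroduces the second variable.

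The missing idea is to make $f(\eta)(\a)$ a \emph{canonical representative of the locally perceived equivalence class} of $\eta\rest\a$ rather than a copy of $\eta\rest\a$ itself. Concretely, the paper sets
$$T_{\eta,\a}=\{p\in 2^{\a}\mid \exists\b>\a\,(L_{\b}\models \ZF^-\land \psi(p,\eta\rest\a,x)\land r(\a)\land \psi^E)\}$$
where $\psi^E$ asserts that $\psi$ defines an equivalence relation, and puts $f(\eta)(\a)=\min_L T_{\eta,\a}$. If $F(\eta,\xi)$ holds, then on a cub of $\a$ some $L_\b$ sees $\psi(\eta\rest\a,\xi\rest\a)$, and since $L_\b\models\psi^E$ one can chain $\psi(p,\eta\rest\a)$ with $\psi(\eta\rest\a,\xi\rest\a)$ \emph{inside} $L_\b$ to get $T_{\eta,\a}=T_{\xi,\a}$, hence $f(\eta)(\a)=f(\xi)(\a)$. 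Conversely $f(\eta)(\a)=f(\xi)(\a)$ produces a common $p\in T_{\eta,\a}\cap T_{\xi,\a}$, and local symmetry and transitivity turn this into a level of $L$ believing $\psi(\eta\rest\a,\xi\rest\a)$; Lemma~\ref{lemma:Fri} (no appeal to Remark~\ref{rem:LemmaStat} is needed here) then says this set of $\a$ contains no $\mu$-cub when $F(\eta,\xi)$ fails. Your remaining steps --- the $\Sii$-ness of $\sim$, the preliminary reduction to relations on $2^\k$, the cardinality bookkeeping placing the values below $\theta$, and the continuity of $f$ --- are all in order, but without the canonical-representative device the argument does not go through.
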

\begin{proof}
  Suppose $E$ is a $\Sii$-equivalence relation on $\k^\k$. Let
  $a\colon \k^\k\to 2^{\k\times\k}$ be the canonical map which takes
  $\eta$ to $\xi$ such that $\xi(\a,\b)=1\iff \eta(\a)=\b$. Further
  let $b$ be a continuous bijection from $2^{\k\times\k}$ to $2^{\k}$.
  Then $c=b\circ a$ is continuous and one-to-one. Let $E'$ be the equivalence relation
  on $2^\k$ such that 
  $$(\eta,\xi)\in E'\iff (\eta=\xi)\lor(\eta,\xi\in\ran c\land \big(c^{-1}(\eta),c^{-1}(\xi)\big)\in E).$$
  Then $c$ is a continuous reduction of $E$ to $E'$. On the other hand $E'$
  is $\Sii$ because it is a continuous image of $E$ (for the generalizations of the basics
  of descriptive set theory, see~\cite{Ha,FHK}). So without loss of generality
  we can assume that $E$ is an equivalence relation on $2^\k$.
  
  For a given $\Sii$ equivalence relation $E$ on $2^\k$ and a regular $\mu<\k$
  we will define a function $f\colon 2^\k\to (2^{<\k})^\k$
  such that for all $\eta,\xi\in \k^\k$, $(\eta,\xi)\in E$ if and only if the set
  $\{\a<\k\mid f(\eta)(\a)=f(\xi)(\a)\}$ contains a $\mu$-cub and $f$ is continuous
  in the topology on $(2^{<\k})^\k$ generated by the sets
  $$\{\eta\mid \eta\rest\a = p\},\ p\in (2^{<\k})^\a,\ \a<\k.$$

  The function $f$ will be defined so that
  if $\eta\in 2^\k$, then the value of $f(\eta)$ at $\a$ will be in 
  some $L_{\g(\a)}$ where $\g(\a)<\k$ is independent
  of $\eta$. 
  If $\k=\l^+$, then for each $\g<\k$, the cardinality of $L_\g$ is at most $\l$, so
  using injections $L_\g\to \l$ it is possible to have the range~$\l^\k$. $(*)$

  If $E$ is a $\Sii$-equivalence relation, then there exists a $\Sigma_1$-formula of set theory
  $\psi(\eta,\xi)=\psi(\eta,\xi,x)=\exists k\f(k,\eta,\xi,x)$ with parameter 
  $x\in 2^\k$ which defines $E$: for all $\eta,\xi\in 2^\k$, $E(\eta,\xi)\iff \psi(\eta,\xi,x)$.

  Let $r(\a)$ be the formula that says ``$\a$ is a regular cardinal'' and let $\psi^E=\psi^E(\k)$ be the sentence
  with parameter $\k$ that asserts that $\psi(\eta,\xi)$ defines an equivalence relation on $2^\k$. 
  For $\eta\in 2^\k$ and $\a<\k$, let 
  $$T_{\eta,\a}=\{p\in 2^{\a}\mid \exists\b>\a(L_{\b}\models \ZF^-\land \psi(p,\eta\rest\a,x)\land r(\a)\land \psi^E)\}.$$
  and let
  $$f(\eta)(\a)=
  \begin{cases}
    \min{}_L T_{\eta,\a}, \text{ if }T_{\eta,\a}\ne \es,\\
    0,\text{ otherwise.}
  \end{cases}
  $$
  Note that $f(\eta)(\a)\in L_{\g}$ where $\g$ is the least ordinal
  such that $L_\g\models \lnot r(\a)$ which is $<\k$, which verifies
  the discussion above at~$(*)$.
  
  Suppose $\psi(\eta,\xi,x)=\exists k\f(k,\eta,\xi,x)$ holds
  and let $k$ be a witness of that.
  Let $\theta$ be a cardinal large enough so that $L_{\theta}\models \ZF^-\land\,\f(k,\eta,\xi,x)\land r(\k)$.
  For $\a<\k$ let $H'(\a)=\Sk(\a\cup\{\k,k,\eta,\xi,x\})^{L_\theta}$.
  Now
  $$D=\{\a<\k\mid H'(\a)\cap\k=\a\land H'(\a)\models \psi^E\}$$ 
  is a cub, and so using Mostowski-collapse we have that
  $$D'=\{\a<\k\mid \exists\b>\a (L_{\b}\models \f(k\rest\a,\eta\rest\a,\xi\rest\a,x\rest\a)
  \land \ZF^- \land\, r(\a)\land \psi^E)\}$$
  contains a cub. 
  Suppose $\a\in D'$ and $p\in T_{\eta,\a}$, i.e. 
  $$\exists\b_1>\a(L_{\b_1}\models \ZF^-\land \,\psi(p,\eta\rest\a)\land r(\a)\land\psi^E).$$
  Since $\a\in D'$, there also exists $\b_2>\a$ such that
  $$L_{\b_2}\models \ZF^-\land\, \psi(\eta\rest\a,\xi\rest\a)\land r(\a)\land\psi^E.$$
  Hence if $\b=\max\{\b_1,\b_2\}$, then 
  $$L_\b\models \psi(p,\eta\rest\a)\land \psi(\eta\rest\a,\xi\rest\a)\land \ZF^- \land r(\a)\land\psi^E$$
  and because 
  $\psi(p,\eta\rest\a)\land \psi(\eta\rest\a,\xi\rest\a)$ implies $\psi(p,\xi\rest\a)$ (because
  $\psi^E$ holds and so transitivity for $\psi(\eta,\xi)$ holds),
  we have that 
  $$L_\b\models \psi(p,\xi\rest\a)\land \ZF^-\land r(\a)\land\psi^E,$$
  which means that $p\in T_{\xi,\a}$. Thus we have proved that $T_{\eta,\a}\subset T_{\xi,\a}$.
  By symmetry we conclude $T_{\eta,\a}=T_{\xi,\a}$ and therefore $f(\eta)(\a)=f(\xi)(\a)$ for all $\a\in D'$
  which contains a cub, so this proves that
  $$\psi(\eta,\xi,x)\Rightarrow \{\a\mid f(\eta)(\a)=f(\xi)(\a)\}\text{ contains a cub.}$$

  Suppose that $\lnot\psi(\eta,\xi,x)$ holds. 
  Then by Lemma~\ref{lemma:Fri} there is no $\mu$-cub inside
  $$\{\a<\k\mid\exists\b>\a(L_{\b}\models\psi(\eta\rest\a,\xi\rest\a)\land\ZF^-\land r(\a))\},$$
  but this is a superset of
  $$\{\a<\k\mid\exists\b>\a(L_{\b}\models\psi(\eta\rest\a,\xi\rest\a)\land\ZF^-\land r(\a))\land \psi^E\},\eqno(**)$$
  so the latter does not contain a $\mu$-cub either.
  Now
  \begin{eqnarray*}
     &&\{\a\mid f(\eta)(\a)=f(\xi)(\a)\}\\
    &=&\{\a\mid \min{}_L T_{\eta,\a}=\min{}_L T_{\xi,\a}\}\\
    &\subset&\{\a\mid\exists p\in T_{\eta,\a}\cap T_{\xi,\a}\}\\
    &=&\{\a\mid\exists p\exists\b_1,\b_2>\a\big((L_{\b_1}\models \psi(p,\eta\rest\a)\land  \ZF^-\land r(\a)\land\psi^E)\\
    &&\phantom{\{\a\mid\exists p\exists\b_1,\b_2>}\land (L_{\b_2}\models \psi(p,\xi\rest\a)\land \ZF^-\land r(\a)\land \psi^E)\big)\}
  \end{eqnarray*}
  ... and taking $\b=\max\{\b_1,\b_2\}$ we continue:
  \begin{eqnarray*}
    &\subset&\{\a\mid\exists p\exists\b>\a(L_{\b}\models \psi(p,\eta\rest\a)\land\psi(p,\xi\rest\a)\land \ZF^-\land r(\a)\land\psi^E)\}\\
    &=&\{\a\mid \exists\b>\a(L_{\b}\models \psi(\eta\rest\a,\xi\rest\a)\land \ZF^-\land r(\a)\land\psi^E)
  \end{eqnarray*}
  which by $(**)$ does not contain $\mu$-cub, so $\{\a<\k\mid f(\eta)(\a)=f(\xi)(\a)\}$ doesn't contain one either.
\end{proof}

\begin{Remark}
  By using the modified version of Lemma~\ref{lemma:Fri} as described in Remark~\ref{rem:LemmaStat}
  one can prove a stronger result. Let $\l,\k$ and $\theta$ be as in the theorem above and $\mu<\k$ regular. 
    For every stationary $S\subset S^\k_\mu$ the equivalence relation on $\theta^\k$ defined by
    $$\eta\sim\xi\iff S\setminus\{\a\mid\eta(\a)=\xi(\a)\}\text{ is non-stationary}$$
    is $\Sii$-complete.
\end{Remark}

Now we will use Theorem \ref{thm:Complete1} first to show that the isomorphism relations $\ISO(\k,\dlo)$ and $\ISO(\k,T_{\o+\o})$ 
are $\Sii$-complete. Both within $\ZFC+V=L$.

\begin{Def}[Colored Linear Orders]
  A \emph{colored linear order} (\emph{clo}) is a pair $(L,c)$ where $L$ is a linear order and $c$ is a function with domain $L$.
  An isomorphism between clos $(L,c)$ and $(L',c')$ is a function $f\colon L\to L'$ which is an isomorphism
  between $L$ and $L'$ and preserves coloring: $c(x)=c'(f(x))$. 
  If $(L,c)$ and $(L',c')$ are clos, then $(L,c)+(L',c')$ is the clo $(L+L',d)$, where $d$
  is such that $d\rest L=c$ and $d\rest L'=c'$. Similarly, if $L'$ is any linear order and $(L,c)$ is a clo,
  then $(L,c)\cdot L'$ is the clo $(L\cdot L',d)$, where $d\rest L\cdot \{x\}=c$ for any $x\in L'$.
\end{Def}

\begin{Thm}[$V=L$] \label{thm:Complete3}
  Suppose $\k=\l^+$ and $\l$ is regular.
  The isomorphism relation on the class of dense linear orderings of size $\k$ 
  is $\Sii$-complete. If $\l>\o$, one can assume that all the orderings are $\k$-like, i.e. 
  all initial segments have size $<\k$.
\end{Thm}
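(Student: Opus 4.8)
The plan is to reduce the relation from Theorem~\ref{thm:Complete1} --- namely $\sim_\mu$ on $\theta^\k = \l^\k$ (taking $\theta = \l$, $\mu = \l$, so we work modulo the $\l$-non-stationary ideal) --- to the isomorphism relation of dense linear orders of size $\k$. The core idea is the standard technique of coding a function $\eta \in \l^\k$ by a linear order built as a $\k$-length sum of "blocks", where the $\a$-th block records the value $\eta(\a)$ in an isomorphism-rigid way, and where two such orders are isomorphic exactly when the corresponding functions agree on a $\l$-cub set. First I would fix, for each ordinal $\gamma < \l$, a rigid countable (or size-$\le\l$) colored-type pattern $P_\gamma$ realizing value $\gamma$; concretely one can use finite linear orders of distinct finite sizes $n_\gamma$ as "markers" separated by copies of $\Q$, so that an isomorphism must match the $n_\gamma$-blocks in order and hence respects the recorded values. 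Then for $\eta \in \l^\k$ define $L_\eta$ to be the $\k$-sum $\sum_{\a<\k}\big(\Q + B_{\eta(\a)} + \Q\big)$, possibly preceded and followed by a copy of $\Q$ to make it dense without endpoints; here $B_\gamma$ is a finite linear order of size depending injectively on $\gamma$ but in a way that still allows a "wildcard" mechanism --- see the obstacle below. The map $\eta \mapsL_\eta$ (after coding $L_\eta$ as an element of $2^\k$ and composing with the canonical surjection to get something in the right space) is continuous since the first $\a$ blocks are determined by $\eta\rest\a$.

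The main step is the isomorphism analysis. The forward direction: if $\{\a : \eta(\a) = \xi(\a)\}$ contains a $\l$-cub $C$, then I would build the isomorphism $L_\eta \to L_\xi$ piece by piece along $C$; on the cub points the blocks agree and between consecutive cub points $\a < \b$ the "interval" of $L_\eta$ between the $\a$-th and $\b$-th cub-blocks is a sum of $<\l$-many blocks bordered by copies of $\Q$, hence (being countable-type dense-ish with the same coloring pattern up to the marker sizes, which we do \emph{not} need to match off $C$ if we insert enough $\Q$-padding) isomorphic to the corresponding interval of $L_\xi$ --- this is where I must be careful that the blocks, though not matching pointwise, sit inside a large dense chunk that absorbs the discrepancy. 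Actually the cleaner route, which I would adopt, is to make the blocks genuinely rigid markers and prove the converse first, then handle the forward direction by noting that between cub points only finitely/boundedly many markers occur and these must match since the $\a$-th and $\b$-th are pinned. For the converse: given an isomorphism $g\colon L_\eta \to L_\xi$, the images of the rigid markers must again be rigid markers of the same size, appearing in the same order; a back-and-forth / closure argument shows the set of $\a<\k$ for which $g$ maps the $\a$-th marker-block of $L_\eta$ exactly onto the $\a$-th marker-block of $L_\xi$ is $\l$-cub (closure under the $\l$-sum structure gives $\l$-club; regularity of $\l$ is used here), and on that set $\eta$ and $\xi$ agree.

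The hard part will be reconciling two conflicting demands: the markers must be \emph{rigid enough} that an isomorphism is forced to respect the recorded values (so the converse works), yet \emph{flexible enough} that agreement on merely a $\l$-cub --- not everywhere --- suffices to build an isomorphism (so the forward direction works). The resolution, which I expect to spell out carefully, is to interleave each value-marker with large homogeneous dense padding $\Q$ and to encode $\eta(\a)$ not by a single finite order but by a pattern whose isomorphism type is determined and which, crucially, is bordered on both sides by $\Q$ so that the "glue" between successive cub-blocks is itself a dense linear order without endpoints of the appropriate small size and therefore unique up to isomorphism. For the final clause, when $\l>\o$: to get $L_\eta$ to be $\k$-like I replace each $\Q$ by a dense order of size exactly $\l$ (e.g. a suitable $\l$-dense order), note every proper initial segment is a $<\k$-sum of size-$\le\l$ pieces hence has size $\l<\k$, and check the construction above goes through verbatim with "countable" replaced by "size $\le\l$", using $\l^{<\l}=\l$ (which holds since $\l$ is regular and, in $L$, GCH) to keep the padding orders unique up to isomorphism of a fixed size. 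I would close by invoking Theorem~\ref{thm:Complete1} to conclude $\Sii$-completeness of $\ISO(\k,\dlo)$, since $\ISO(\k,\dlo)$ is itself visibly $\Sii$.
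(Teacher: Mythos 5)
Your high-level plan --- reduce the $\l$-cub relation of Theorem~\ref{thm:Complete1} (with $\theta=\mu=\l$) to $\ISO(\k,\dlo)$ via a $\k$-sum of blocks coding $\eta(\a)$, and prove isomorphism iff agreement on a $\l$-cub --- is exactly the paper's plan. But the concrete coding you propose, finite rigid markers $B_{\eta(\a)}$ separated by copies of $\Q$, breaks down, and the tension you correctly flag in your third paragraph is not resolved by your proposed fix. First, a finite block of size $\ge 2$ contains two consecutive points, so $\Q+B_\gamma+\Q$ is not dense, and there are only countably many finite sizes while you must encode $\l$-many values with $\l$ possibly uncountable. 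More seriously, the forward direction fails: between two consecutive points of a $\l$-cub $C\subset\k=\l^+$ there can be up to $\l$-many positions $\gamma$, and at those positions $\eta(\gamma)$ and $\xi(\gamma)$ need not agree, so the corresponding segments of $L_\eta$ and $L_\xi$ contain genuinely different sequences of rigid markers. Any isomorphism of linear orders carries a maximal discrete block of size $n$ to one of size $n$, so $\Q$-padding cannot ``absorb the discrepancy''; your assertion that the markers between pinned cub points ``must match'' is precisely what is false off the cub.

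The paper resolves this in an essentially different way. It first works with \emph{colored} linear orders: the marker at a $\l$-cofinal $\a$ is a \emph{single} point of color $f(\a)$, followed by a \emph{saturated} colored dense order $\eta$ of size $\l$ in which every color occurs densely between any two small sets (property $(*)$ in the proof); blocks at non-$\l$-cofinal positions are just copies of $\eta$. Saturation yields absorption laws such as $\eta\cong\eta+(1,c_\a)+\eta$ and $\sum_{i<\a}\tau_i\cong\eta$ for $\l$-cofinal $\a<\k$, so a marker sitting at a non-cub position is literally invisible (it is indistinguishable from a point already present in $\eta$); this is what makes the forward direction work. A marker is ``readable'' only as the minimum of the tail above a $\l$-cofinal initial segment, and the converse is a club-filtration argument on an isomorphism together with stationarity of the disagreement set. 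Colors are then eliminated not by finite markers but by substituting for color $i$ a dense order $\xi_i$ coded by a stationary set $S_i$, with $(S_i)_{i<\l}$ pairwise disjoint, so the $\xi_i$ are pairwise non-isomorphic yet all dense (this also gives the $\k$-like refinement). To repair your argument you would need to replace your marker mechanism by one with this kind of absorption property.
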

\begin{proof}
  We will show that there exists a continuous function $f\colon\l^\k\to 2^\k$ such that
  for all $\eta\in \l^\k$, $\A_{f(\eta)}$ is a dense linear order without end points and
  for all $\eta,\xi\in\l^{\k}$ the set
  $\{\a<\k\mid \eta(\a)=\xi(\a)\}$ contains a $\l$-cub if and only if
  $\A_{f(\eta)}\cong \A_{f(\xi)}$. 
  Thus we embed a $\Sii$-complete (by Theorem~\ref{thm:Complete1}) equivalence relation into the isomorphism of dense linear orders
  which suffices.

  We will first define a function $f$ which attaches to each function in $\l^\k$ a colored linear order. Then
  we will show how to eliminate the use of colors by replacing each point by a linear ordering which depends on its color.
  
  Let $\eta$ be a saturated dense linear ordering without end points
  of cardinality $\l$. Suppose that the coloring $c\colon \eta\to\l\setminus\{0\}$ satisfies 
  \begin{itemize}
  \item[$(*)$] If $A,B\subset\eta$ have cardinality
    less than $\l$ and $x\in A,y\in B$ we have $x<y$, then for all $\a\in\l$ there exists $z$ with $x<z<y$ for all $x\in A, y\in B$
    and $c(z)=\a$.
  \end{itemize}
  Then we call $(\eta,c)$ \emph{a saturated clo}.
  \begin{claim}{1}
    A saturated clo exists.
  \end{claim}
  \begin{proofVOf}{Claim 1}
    This can be done for example as follows. Let $\xi$ be a saturated dense linear order with domain $\l\setminus\{0\}$.
    Let $\eta=\{f\colon\a+1\to \xi\mid \a<\l\}$ and for $f,g\in\eta$ let
    $$f<g\iff (f\subset g)\lor (f(\a)<g(\a)\text{, where }\a=\min\{\b\mid f(\b)\ne g(\b)\}).$$
    Let $c(f)=f(\max\dom(f))$.
    It is not difficult to check that this satisfies all the requirements.
  \end{proofVOf}

  Given two saturated colored linear orderings $(\eta,c)$ and $(\eta',c)$,
  there is an isomorphism $f\colon (\eta,c)\to(\eta',c')$.
  This can be seen by a simple back-and-forth argument. 
  By this observation we have: if $\eta=(\eta,c)$ is a saturated clo, then 
  \begin{itemize}
  \item[(1)] $\eta\cong \eta+\eta$,
  \item[(2)] for all $\a<\l$, $\eta\cong \eta + (1,c_\a) +\eta $, where $1$ is a linear ordering of length $1$ and $c_\a$ is the coloring with 
    range $\{\a\}$,
  \item[(3)] for all $\a<\k$, $\eta\cdot \a + \eta\cong \eta$,
  \item[(4)] if $\a<\k$ is $\l$-cofinal and 
    $\tau_i=(1,c_{\b_i})+\eta$ for all $\l$-cofinal $i<\a$ and $\tau_i=\eta$ otherwise, then 
    $(\sum_{i<\a}\tau_i)\cong\eta$.
  \end{itemize}
  
  We will now define a function with domain $\l^\k$ and range the set of colored linear orders.
  Then we will define a function from that range into (non-colored) dense linear orders.

  As above, denote by $(1,c_\a)$ a clo with a single element of color $\a$.
  Given $f\colon \k\to\l$, let
  $$\Phi(f)=\sum_{\a<\k}\tau_{\a}^f,$$
  where $\tau_\a^f=(1,c_{f(\a)})+\eta$, if $\cf(\a)=\l$ and $\tau^f_\a=\eta$ otherwise.
  
  \begin{claim}{2}
    \nopagebreak{For $f,g\in \l^\k$, the set $\{\a\mid f(\a)=g(\a)\}$ contains a $\l$-cub if and only if 
    $\Phi(f)\cong \Phi(g)$.}
  \end{claim}
  \begin{proofVOf}{Claim 2}
    Suppose $\{\a\mid f(\a)=g(\a)\}$ contains a $\l$-cub $C\subset S^\k_\l$. 
    Let $\{a_i\mid i<\k\}$ be an enumeration of $C$ such that $i<j\iff a_i<a_j$.
    The orderings
    $$\Phi_{i}(f)=\sum_{a_i\le\a<a_{i+1}}\tau_{\a}^f$$
    and 
    $$\Phi_{i}(g)=\sum_{a_i\le\a<a_{i+1}}\tau_{\a}^g$$
    are isomorphic by (4) above and because the color of $\min \Phi_{i}(f)$ is the same as that
    of $\min \Phi_i(g)$ for all $i$ by the definition of $C$. This proves ``$\Rightarrow$''.

    Suppose $\{\a\mid f(\a)=g(\a)\}$ does not contain a $\l$-cub. Then its complement
    contains a $\l$-stationary set $S\subset S^\k_\l$. Suppose for a contradiction that 
    there is an isomorphism $F$ between $\Phi(f)$ and $\Phi(g)$.
    Let
    $$\Phi^i(f)=\sum_{\a<i}\tau_{\a}^f$$
    and
    $$\Phi^i(g)=\sum_{\a<i}\tau_{\a}^g.$$
    By the standard argument, there is a cub set $C$ such that for all $i\in C$ we have that
    the restriction
    $F\rest\Phi^i(f)\colon \Phi^i(f)\to \Phi^i(g)$ is an isomorphism. Pick
    and element $j\in C\cap S$. Then $F\rest\Phi^j(f)$ is an isomorphism, but
    the color of $\min (\Phi(f)\setminus \Phi^j(f))$ is not the same as the color
    of $\min (\Phi(g)\setminus \Phi^j(g))$ which is a contradiction.    
  \end{proofVOf}
  
  Denote by $\ISO(\clo)$ the isomorphism relation on all colored linear orders
  and let $\ISO(\dlo)$ be the isomorphism relation on all the (non-colored) dense linear orders.
  We have shown now that an arbitrary $\Sii$-equivalence relation $E$ is Borel reducible to
  $\ISO(\clo)$. Next we show how to reduce the range of that reduction to $\ISO(\dlo)$.

  To do that, we replace every point of $\Phi(f)$ by a (non-colored) dense linear order whose isomorphism
  type depends on the color of the corresponding point
  and we will do it so that the original clos are isomorphic if and only if the resulting dense linear orders
  are isomorphic. Suppose first that $\l>\o$.
  Let $(S_i)_{i<\l}$ be a $\l$-long sequence of disjoint stationary subsets of $\l$. 
  Let 
  $$\xi_i=\sum_{\a<\k}\s_\a,$$
  where $\s_\a=1+\Q$, if $\a\in S_i$ and $\s_\a=\Q$ otherwise, where $\Q$ is the order of the rational numbers.
  Then $\xi_i\not\cong\xi_{j}$ for all $i\ne j$ by a similar argument as above for $\Phi(f)\not\cong \Phi(g)$.
  Then let 
  $$\Psi(f)=\sum_{a\in \Phi(f)}\xi_{c(a)}.$$
  Note that since $\Phi(f)$ is $\k$-like, also $\Psi(f)$ is $\k$-like.

  If $\l=\o$, then do the same, but now $S_i$ are stationary subsets of $\k=\o_1$.
  In this case we lose the property that $\Psi(f)$ is $\k$-like.
\end{proof}

\section{The Isomorphism Relation of $\M(T_{\o+\o})$}

The models of $T_{\o+\o}$ which we are going to investigate are essentially certain 
trees as will be shown later (Lemma~\ref{lemma:trTbired}). Thus we show first that the
isomorphism relation on these trees is $\Sii$-complete in $L$ by using results from previous section.
The we will show, using a result from \cite{HK}, that it is consistent that the isomorphism relation 
is not $\Sii$-complete and in fact not even $S_\k$-complete.

In \cite{FHK} assuming $\k=\l^+$ and $\l=\l^\o$, we constructed
for each set $S\subset S^\k_\o$ a $\k^+,(\o+2)$-tree
$J(S)$ such that $S\sd S'$ is non-stationary $\iff$ $J(S)\cong J(S')$. Adopting
a similar construction for colored trees, we will construct for each function
$f\in \l^\k$ a colored $\k^+,(\o+2)$-tree $J_f$ such that for all $f,g\in \l^\k$, the set
$$\{\a\mid f(\a)=g(\a)\}$$
contains an $\o$-cub if and only if $J_f\cong J_g$. The proof of Lemma 4.89 of \cite{FHK}
has to be modified such that instead of the \emph{dichotomy} for every branch to either have a leaf or not, 
there is a \emph{$\l$-chromatomy} for the leaf of every branch to be of one of the $\l$ colors.

\begin{Def}\label{def:CT}
  Suppose $\k=\l^+$.
  A \emph{colored $\k^+,(\o+2)$-tree} is a pair $(t,c)$, where $t$ is a $\k^+,(\o+2)$-tree
  (every element has less than $\k^+$ successors and all branches have
  order type less than $\o+2$) and $c$ is a map whose domain is the set $\{x\in t\mid \hgt(x)=\o\}$,
  where $\hgt(x)$ is the height of $x$ -- the order type of $\{y\in t\mid y<x\}$.
  The range of $c$ is $\l\setminus \{0\}$.

  An isomorphism between colored trees $(t,c)$ and $(t',c')$ is a map $f\colon t\to t'$ which
  is an isomorphism between $t$ and $t'$ and for each $x\in \dom c$, $c(x)=c'(f(x))$.
  
  Denote the set of all colored $\k^+,(\o+2)$-trees by $\CT^\o$.
  Let $\CT^\o_*\subset \CT^\o$ be the set of those trees in which
  every element has infinitely many successors at each level $<\o$.
\end{Def}

\begin{Def}\label{def:Filtration}
  Let $t$ be a colored tree of size $\k=\l^+$. Suppose $(I_{\a})_{\a<\k}$ is a collection of subsets of $t$ such that
  \begin{myItemize}
  \item for each $\a<\k$, $I_\a$ is a downward closed subset of $t$,
  \item $\Cup_{\a<\k} I_{\a}=t$,
  \item if $\a<\b<\k$, then $I_{\a}\subset I_{\b}$,
  \item if $\g$ is a limit ordinal, then $I_\g=\Cup_{\a<\g}I_\a$,
  \item for each $\a<\k$ the cardinality of $I_\a$ is less than $\k$.
  \end{myItemize}
  Such a sequence $(I_{\a})_{\a<\k}$ is called $\k$-\emph{filtration} or just \emph{filtration} of $t$. 
\end{Def}

\begin{Def}\label{def:FuncT}
  Suppose that $\k=\l^+$ and $(t,c)$ is a colored tree of size $\k$, $t\subset\k^{\le\o}$, 
  with colors ranging in $\l\setminus \{0\}$ and let $\I=(I_{\a})_{\a<\k}$ be a filtration of $t$.
  Define $f_{\I,t}\in \l^\k$ as follows. Fix $\a<\k$.
  Let $B_\a$ be the set of all $x\in t$ with $x\notin I_\a$, but $x\rest n\in I_\a$ for all $n<\o$. 
  \begin{itemize}
  \item[(a)] If $B_\a$ is non-empty and there is $\b$ 
    such that for all $x\in B_\a$, $c(x)=\b$, then let $f_{\I,t}(\a)=\b$,
  \item[(b)] Otherwise let $f_{\I,t}(\a)=0$
  \end{itemize}
\end{Def}

For $f,g\in\l^\k$, by $f\sim g$ we mean that 
$\{\a<\k\mid f(\a)=g(\a)\}$ contains an $\o$-cub.

\begin{Lemma}\label{lemma:FiltrEquiv}
  Suppose colored trees $(t_0,c_0)$ and $(t_1,c_1)$ are isomorphic, and
  $\I=(I_\a)_{\a<\k}$ and $\J=(J_\a)_{\a<\k}$ are $\k$-filtrations of $t_0$ and $t_1$ respectively.
  Then $f_{\I,t_0}\sim f_{\J,t_1}$.  
\end{Lemma}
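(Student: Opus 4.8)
The plan is to show that an isomorphism $F\colon t_0\to t_1$ transports the filtration $\I$ on $t_0$ to a filtration on $t_1$ that is "equivalent" to $\J$ on a cub set of indices, and that along such a cub the associated functions $f_{\I,t_0}$ and $f_{\J,t_1}$ agree. First I would fix an isomorphism $F\colon (t_0,c_0)\to (t_1,c_1)$. Define $\I'=(I'_\a)_{\a<\k}$ by $I'_\a=F[I_\a]$; since $F$ is an order isomorphism preserving height, each $I'_\a$ is downward closed in $t_1$, the union is $t_1$, the sequence is increasing and continuous at limits, and $|I'_\a|=|I_\a|<\k$, so $\I'$ is a $\k$-filtration of $t_1$. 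Moreover, because $F$ preserves colors and heights, $f_{\I',t_1}=f_{\I,t_0}$ exactly (the set $B_\a$ computed from $\I'$ in $t_1$ is the $F$-image of the $B_\a$ computed from $\I$ in $t_0$, and colors match pointwise, so case (a)/(b) and the assigned value coincide). Thus it suffices to prove the lemma in the case $t_0=t_1=t$, $c_0=c_1=c$, with two filtrations $\I$ and $\J$ of the \emph{same} colored tree: I must show $f_{\I,t}\sim f_{\J,t}$.

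For that I would use the standard fact that any two $\k$-filtrations of a structure of size $\k$ agree on a cub: let
$$C=\{\a<\k\mid I_\a=J_\a\}.$$
Since $\Cup I_\a=\Cup J_\a=t$, $|I_\a|,|J_\a|<\k$, and both sequences are increasing and continuous, a routine back-and-forth/sandwiching argument (alternately absorbing $I_\a\subseteq J_{\b}$ and $J_\b\subseteq I_{\g}$) produces a cub of indices where $I_\a=J_\a$; one can even arrange $C$ to consist of $\o$-cofinal ordinals if desired, but a cub suffices since a cub contains an $\o$-cub. For every $\a\in C$ the set $B_\a$ defined from $I_\a$ equals the set $B_\a$ defined from $J_\a$ (it depends only on $I_\a=J_\a$), and hence $f_{\I,t}(\a)=f_{\J,t}(\a)$ by Definition~\ref{def:FuncT}. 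Therefore $\{\a<\k\mid f_{\I,t}(\a)=f_{\J,t}(\a)\}\supseteq C$ contains an $\o$-cub, i.e. $f_{\I,t}\sim f_{\J,t}$, and combined with the first paragraph this gives $f_{\I,t_0}\sim f_{\J,t_1}$.

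The only genuine point requiring care is the existence of the cub $C$ where the two filtrations coincide; everything else is bookkeeping about how $F$ interacts with downward-closedness, heights, and colors. I expect no real obstacle here: the filtration-agreement argument is the usual closure-point construction (the set of $\a$ that are simultaneously closure points of the two $\subseteq$-monotone maps $\a\mapsto$(least $\b$ with $I_\a\subseteq J_\b$) and its dual is cub), and transporting a filtration along an isomorphism is immediate from the definitions. One should double-check that the color hypothesis is used in exactly the right spot — namely in concluding $c(x)=c'(F(x))$ so that case (a) with value $\b$ is preserved — but this is precisely the definition of an isomorphism of colored trees.
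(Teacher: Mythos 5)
Your proposal is correct and follows essentially the same route as the paper: transport $\I$ along the isomorphism $F$ to the filtration $F\I=(F[I_\a])_{\a<\k}$ of $t_1$, observe that $f_{\I,t_0}=f_{F\I,t_1}$ pointwise since $F$ preserves levels and colors, and then use the standard sandwiching argument to get an ($\o$-)cub set of $\a$ with $F[I_\a]=J_\a$, on which the two functions agree. No gaps.
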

\begin{proof}
  Let $F\colon t_0\to t_1$ be a (color preserving) isomorphism. Then 
  $F\I=(F[I_\a])_{\a<\k}$ is a filtration of $t_1$ and for all $\a<\k$
  $$f_{\I,t_0}(\a)=f_{F\I,t_1}(\a).\eqno(\star)$$
  Define the set $C=\{\a\mid F[I_\a]=J_\a\}$. Let us show that it is $\o$-cub. 
  Let $\a\in \k$. Define $\a_0=\a$ and by induction pick $(\a_n)_{n<\o}$ 
  such that $F[I_{\a_n}]\subset J_{\a_{n+1}}$ for odd $n$ and $J_{\a_n}\subset F[I_{\a_{n+1}}]$ for even $n$.
  This is possible by the definition of a $\k$-filtration. Then $\a_\o=\Cup_{n<\o}\a_n\in C$.
  Clearly $C$ is closed and 
  $C\subset \{\a<\k\mid f_{F\I,t_1}(\a)= f_{\J,t_1}(\a)\}$ so now by $(\star)$ we have the result.
\end{proof}

\begin{Lemma}\label{lem:StoJS}
  Suppose $\k=\l^+$, $\l^\o=\l$ and $\k^{<\k}=\k$. 
  There exists a function $J\colon \l^\k\to \CT^\o_*$ such that for all $f,g\in \l^\k$,
  $$f\sim g\iff J_f\cong J_g$$
  (as colored trees).
\end{Lemma}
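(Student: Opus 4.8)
The plan is to adapt the construction of the trees $J(S)$ from \cite[\S4]{FHK} so that, instead of recording a single bit (``this branch has a leaf or not''), each relevant branch carries one of $\l$ colors, and then to arrange matters so that the color data of $J_f$ reconstructs the function $f$ up to the equivalence $\sim$. Concretely, I would start from a $\k$-filtration bookkeeping as in Definition~\ref{def:Filtration} and Definition~\ref{def:FuncT}: fix in advance a canonical way to carve $\k$ into $\k$-many blocks, and for each $\a<\k$ build, inside $J_f$, a bundle of $\o$-branches sitting ``above level $\a$'' in the filtration sense, such that (a) if $\cf(\a)=\o$ then all the leaves of those branches (at height $\o$) get color $f(\a)$ (or rather $f(\a)$ shifted off $0$, since colors live in $\l\setminus\{0\}$), while (b) if $\cf(\a)\ne\o$ the construction puts nothing genuinely new there, mirroring the $\tau^f_\a=\eta$ case in the proof of Theorem~\ref{thm:Complete3}. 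The ambient tree shape should be a fixed $\k^+,(\o+2)$-tree in $\CT^\o_*$ (so every node has infinitely many successors at each finite level), chosen homogeneous enough that isomorphisms can be built freely; the only ``moduli'' are the colors.

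The forward direction ($f\sim g\Rightarrow J_f\cong J_g$) should then be a block-by-block back-and-forth exactly parallel to Claim~2 in the proof of Theorem~\ref{thm:Complete3}. Fix an $\o$-cub $C\subset S^\k_\o$ on which $f$ and $g$ agree; enumerate $C$ as $\{a_i\mid i<\k\}$ increasingly; between consecutive points $a_i,a_{i+1}$ the piece of $J_f$ is isomorphic to the corresponding piece of $J_g$ because the homogeneity of the underlying tree handles everything except the leaf colors, and those colors are determined by the values $f(\a)=g(\a)$ for $\cf(\a)=\o$ in that interval, which coincide. Gluing the piecewise isomorphisms along the cub (the tree is the increasing union of these pieces, with the color assignment respected at each stage) yields an isomorphism $J_f\cong J_g$.

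For the converse ($J_f\cong J_g\Rightarrow f\sim g$): given an isomorphism $F$, choose canonical $\k$-filtrations $\I$ of $J_f$ and $\J$ of $J_g$ induced by the a priori block decomposition, so that by construction $f_{\I,J_f}$ recaptures $f$ (on an $\o$-cub, using Definition~\ref{def:FuncT}(a): on the club of $\a$ with $\cf(\a)=\o$ for which $B_\a\ne\es$, the branches newly appearing above level $\a$ all carry color $f(\a)$, so $f_{\I,J_f}(\a)=f(\a)$), and similarly $f_{\J,J_g}$ recaptures $g$; then Lemma~\ref{lemma:FiltrEquiv} gives $f_{\I,J_f}\sim f_{\J,J_g}$, hence $f\sim g$. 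The main obstacle — and the place where the real work of \cite[Lemma~4.89]{FHK} must be redone — is getting the construction rigid enough in the ``color'' coordinate while flexible enough everywhere else: one must ensure that in a hypothetical isomorphism $F$ the $\o$-cofinal levels cannot be ``shifted'' so as to hide a disagreement $f(\a)\ne g(\a)$ inside a stationary set (the color of $\min$ of the next block must be an isomorphism invariant, exactly as the color of $\min(\Phi(f)\setminus\Phi^j(f))$ was in Theorem~\ref{thm:Complete3}), and simultaneously that the non-$\o$-cofinal levels contribute nothing that could obstruct building $F$ in the forward direction. I would also double-check that $\l^\o=\l$ is used precisely where \cite{FHK} uses it (to keep the trees of size $\k$ and to enumerate the $\o$-branches), and that $\k^{<\k}=\k$ guarantees $J$ is a well-defined function into $\CT^\o_*\subseteq (2^\k$-coded$)$ objects.
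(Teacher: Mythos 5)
Your overall architecture matches the paper's: the converse direction ($J_f\cong J_g\Rightarrow f\sim g$) is indeed handled exactly as you say, by arranging that for any $\k$-filtration $\I$ of $J_f$ one has $f_{\I,J_f}\sim f$ and then invoking Lemma~\ref{lemma:FiltrEquiv}. But there is a genuine gap: the construction of $J_f$ itself, which is the actual content of the lemma, is never given --- you postulate an ambient tree ``chosen homogeneous enough that isomorphisms can be built freely'' whose ``only moduli are the colors,'' and you yourself flag that producing such a tree is ``the place where the real work must be redone.'' That work is not optional filler. The paper builds $J_f$ as a subtree of $(\o\times\k^4)^{\le\o}$ whose nodes carry, in their extra coordinates, pointers into a fixed enumeration $P^{\a,\b}_\g$ of \emph{all} downward-closed colored subtrees of bounded pieces of $\k^{<\o}$, arranged so that every isomorphism type occurs cofinally often; this built-in local universality is precisely what makes the forward direction possible, and simultaneously the coordinate bookkeeping (conditions \eqref{J-1}--\eqref{J7}) forces $\sup\ran\eta_5=\a$ for the branches materializing at stage $\a$ of the canonical filtration, which is what makes the converse direction work. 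Exhibiting a tree with both properties at once is the lemma; asserting that one exists is not a proof.

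The forward direction as you sketch it is also not right as stated. The analogy with Claim~2 of Theorem~\ref{thm:Complete3} breaks down because a linear order decomposes as a sum $\sum_i\Phi_i(f)$ of independent blocks along the cub, and isomorphisms of the blocks concatenate for free; a $\k^+,(\o+2)$-tree admits no such decomposition. The pieces $J^{a_{i+1}}_f\setminus J^{a_i}_f$ are not independent summands: new colored $\o$-branches appear exactly at limit stages of the filtration, so a union of ``blockwise'' isomorphisms need not be an isomorphism at limits. The paper instead runs a transfinite back-and-forth constructing partial isomorphisms $F_{\a_i}$ with a list of nine inductive hypotheses, and the critical case is the successor of a limit ordinal, where a branch $\eta$ all of whose proper initial segments lie in $\ran F_{\a_{j-1}}$ but which itself does not must be assigned an image; there one shows $\sup\ran\eta_5=\a_{j-1}\in C$ and hence the colors agree because $f(\a_{j-1})=g(\a_{j-1})$. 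Your sketch never confronts this limit-stage coherence problem, which is where agreement of $f$ and $g$ on the cub is actually consumed.
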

\begin{proofV}{Lemma \ref{lem:StoJS}}  
  Define the ordering on $\o\times\k\times\k\times\k\times\k$ lexicographically, i.e.
  such that $(\a_1,\a_2,\a_3,\a_4,\a_5)<_{\text{lex}}(\b_1,\b_2,\b_3,\b_4,\b_5)$ if and only if
  $\a_k<\b_k$ for the smallest $k$ with $\a_k\ne \b_k$ (and such $k$ exists).
  We order the set $(\o\times\k\times\k\times\k\times\k)^{\le \o}$ as a tree:
  $\eta<\xi$ if and only if $\eta\subset\xi$.

  For each $f\in \l^\k$ we will define a colored tree $J_f=(J_f,c_f)$ such that 
  $J_f\subset (\o\times\k\times\k\times\k\times\k)^{\le\o}$ with the induced ordering
  and
  \begin{myItemize}
  \item[(a)] If $f\in \l^\k$ and $\I$ is any $\k$ filtration of $J_f$, then $f_{\I,J_f}\sim  f$.
  \item[(b)] If $f\sim  g$, then $J_f\cong J_g$.
  \end{myItemize}
  This suffices, because if $J_f\cong J_g$, then 
  for some filtrations $\I$ and $\J$ of $J_f$ and $J_g$ respectively we have
  by Lemma \ref{lemma:FiltrEquiv} that $f_{\I,J_f}\sim f_{\J,J_g}$ which further implies by (a)
  that $f\sim g$.

  Let $f\in\l^\k$ and let us define a preliminary colored tree $(I_f,d_f)$
  as follows. Let $I_f$ be the tree of all strictly increasing functions 
  from $n\le\o$ to $\k$ and for $\eta$ with domain $\o$, let $d_f(\eta)=f(\sup\ran \eta)$.

  For ordinals $\a<\b$ and $i<\o$ we adopt the notation:
  \begin{myItemize}
  \item $[\a,\b]=\{\g\mid \a\le \g\le\b\}$,
  \item $[\a,\b)=\{\g\mid \a\le \g <\b\}$,
  \item $\ruszh(\a,\b,i)=\Cup_{i\le j\le\o}\{\eta\colon [i,j)\to [\a,\b)\mid \eta\text{ strictly increasing}\}$.
  \end{myItemize}

  For each $\a,\b<\k$ let us define the colored trees $P^{\a,\b}_{\g}$, for $\g<\k$ as follows. 
  If $\a=\b=\g=0$, then $P^{0,0}_{0}=(I_f,d_f)$. Otherwise let $\{P^{\a,\b}_\g\mid \g<\k\}$
  enumerate all downward closed subtrees of $\ruszh(\a,\b,i)$ for all $i$, with all possible colorings
  and in such a way, that every isomorphism type appears cofinally often in the enumeration.
  The isomorphism types are of course counted with respect to color preserving isomorphisms.
  Define 
  $$\rusch(P^{\a,\b}_{\g})$$ 
  to be the natural number $i$ such that $P^{\a,\b}_{\g}\subset \ruszh(\a,\b,i)$.
  The enumeration is possible, because the number of all downward closed subsets
  of $\ruszh(\a,\b,i)$, $i<\o$, is at most
  \begin{eqnarray*}
  \Big|\Cup_{i<\o}\Po(\ruszh(\a,\b,i))\Big|&\le& \o\times |\Po(\ruszh(0,\b,0))|\\    
  &\le&\o\times |\Po(\b^\o)|\\
  &=&\o\times 2^{\b^{\o}}\\
  &\le&\o\times \k\\
  &=&\k
  \end{eqnarray*}
  and since for each $\b<\k$, $\ruszh(\a,\b,i)$ has cardinality $<\k$, even when we add all possible colorings,
  the number of trees remains $\k\times \l^\l=\k$.
  For $f\in \l^\k$ define $J_f=(J_f,c_f)$ to be the tree of all 
  $\eta\colon s\to \o\times \k\times\k\times\k\times\k=\o\times\k^4$ 
  such that $s\le\o$ and the following conditions are met for all $i,j<s$:
  \begin{myEnumerate}
  \item \label{J-1}$\eta\rest n\in J_f$ for all $n<s$,
  \item \label{J0}$\eta$ is strictly increasing with respect to the lexicographical order on $\o\times\k^4$,
  \item \label{J1}$\eta_1(i)\le \eta_1(i+1)\le \eta_1(i)+1$,
  \item \label{J3}$\eta_1(i)=0\rightarrow \eta_{2}(i)=\eta_{3}(i)=\eta_{4}(i)=0$,
  \item \label{J4}$\eta_1(i)<\eta_1(i+1)\rightarrow \eta_2(i+1)\ge \eta_3(i)+\eta_4(i)$,
  \item \label{J5}$\eta_1(i)=\eta_1(i+1)\rightarrow (\forall k\in\{2,3,4\})(\eta_k(i)=\eta_k(i+1))$,
  \item \label{J6}if for some $k<\o$, $[i,j)=\eta_1^{-1}\{k\}$, then\\
    $\eta_5\rest[i,j)\in P^{\eta_2(i),\eta_3(i)}_{\eta_4(i)}$
  \item \label{J7}if $s=\o$, then either 
    \begin{itemize}
   \item[(a)] $(\exists m<\o)(\forall k<\o)(k>m\rightarrow \eta_1(k)=\eta_1(k+1))$ and the color of $\eta$
    is determined by $P^{\eta_2(m),\eta_3(m)}_{\eta_4(m)}$: $c_f(\eta)=c(\eta_5)$, where $c$ is the coloring
    of $P^{\eta_2(m),\eta_3(m)}_{\eta_4(m)}$.
    \item[or else]
    \item[(b)] $c_f(\eta)=f(\sup\ran \eta_5).$
    \end{itemize}
  \item Order $J_f$ as a subtree of $(\o\times\k^4)^{\le \o}$, $\eta<\xi\iff\eta\subset\xi$.
  \end{myEnumerate}

  Note that it follows from the definition of $P^{\a,\b}_\g$ and the conditions \eqref{J6} and
  \eqref{J4} that for all $i<j<\dom \eta$ and $\eta\in J_f$:\\

  \begin{myEnumerate}\setcounter{enumi}{9}
  \item \label{J2} $i<j\rightarrow \eta_5(i)<\eta_5(j)$.\\
  \end{myEnumerate}
  Also we have that if $\eta\in (\o\times\k^4)^{\le \o}$ is such that
  $\eta\rest n\in J_f$ for all $n$, then $\eta\in J_f$.

  It is easy to see that these trees are in $\CT^\o_*$.

  For each $\a<\k$ let 
  $$J^{\a}_f=\{\eta\in J_f\mid \ran\eta\subset\o\times(\b+1)^4\text{ for some }\b<\a\}.$$
  Then $(J^{\a}_f)_{\a<\k}$ is a $\k$-filtration of $J_f$ (see Claim~2 below). 

  If $\eta\in J_f$ and $\ran\eta_1=\o$, then 
  $$\sup\ran\eta_4\le\sup\ran\eta_2=\sup\ran\eta_3=\sup\ran\eta_5\eqno(\#)$$
  and if in addition to that, $\eta\rest k\in J^{\a}_f$ 
  for all $k$ and $\eta\notin J^{\a}_f$ or if $\ran\eta_1=\{0\}$, then 
  $$\sup\ran\eta_5=\a.\eqno(\circledast)$$\label{circledast}
  To see $(\#)$ suppose $\ran\eta_1=\o$.  By \eqref{J2}, $(\eta_5(i))_{i<\o}$ is an
  increasing sequence. By \eqref{J6} $\sup\ran\eta_3\ge\sup\ran\eta_5\ge\sup\ran\eta_2$.
  By \eqref{J4}, $\sup\ran\eta_2\ge\sup\ran\eta_3$ and again by \eqref{J4} $\sup\ran\eta_2\ge \sup\ran\eta_4$.
  Inequality $\sup\ran\eta_5\le\a$ is an immediate consequence of the definition of $J^{\a}_f$,
  so $(\circledast)$ follows now from the assumption that $\eta\notin J^{\a}_f$.

  \begin{claim}{1}
    Suppose $\xi\in J^{\a}_f$ and $\eta\in J_f$. Then
    if $\dom \xi<\o$, $\xi\subsetneq \eta$ and 
    $(\forall k\in \dom\eta\setminus\dom\xi)\big(\eta_1(k)=\xi_1(\max\dom\xi)\land \eta_1(k)>0\big),$
    then
    $\eta\in J^{\a}_f$.
  \end{claim}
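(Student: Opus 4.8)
The plan is to verify that $\eta$ satisfies each of the defining conditions \eqref{J-1}--\eqref{J7} (and hence lies in $J_f$, which we already know from the hypothesis), and then to check the one extra requirement that makes $\eta$ a member of $J^{\a}_f$, namely that $\ran\eta\subset\o\times(\b+1)^4$ for some $\b<\a$. Since $\eta\in J_f$ is given, the only real content is this last range condition; the structural conditions need not be re-checked. So the argument reduces to bounding the first four coordinates of $\eta(k)$ for $k\in\dom\eta\setminus\dom\xi$ by some fixed $\b<\a$.

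First I would use the hypothesis on those extra indices: for every $k\in\dom\eta\setminus\dom\xi$ we have $\eta_1(k)=\xi_1(\max\dom\xi)$ and $\eta_1(k)>0$. In particular $\eta_1$ is eventually constant with positive value, so in fact $\eta_1(k)=\eta_1(k+1)$ for all such $k$, and condition \eqref{J5} forces $\eta_k(i)=\eta_k(i+1)$ for $k\in\{2,3,4\}$ whenever $i,i+1\in\dom\eta\setminus\dom\xi$. Let $m=\max\dom\xi$. Then for all $k>m$ in $\dom\eta$ we get $\eta_2(k)=\eta_2(m+1)$, $\eta_3(k)=\eta_3(m+1)$, $\eta_4(k)=\eta_4(m+1)$ — the coordinates $2,3,4$ are literally constant past $m$. (One must separately note that coordinate $1$ is also constant past $m$, with value $\xi_1(m)$.) Thus the ``new'' part of $\ran\eta$ is contained in the finite set $\{\xi_1(m)\}\times\{\eta_2(m+1),\eta_3(m+1),\eta_4(m+1)\}\times\k$; wait — coordinate $5$ is still unbounded, but that coordinate is irrelevant to membership in $J^{\a}_f$, which only constrains coordinates $1$–$4$. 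Meanwhile the ``old'' part $\ran(\xi)$ is contained in $\o\times(\b_0+1)^4$ for some $\b_0<\a$ because $\xi\in J^{\a}_f$.

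Combining the two pieces, $\ran\eta$ is contained (in coordinates $1$–$4$) in $\o\times(\b+1)^4$ where $\b=\max\{\b_0,\xi_1(m),\eta_2(m+1),\eta_3(m+1),\eta_4(m+1)\}$. It remains only to check $\b<\a$. We have $\b_0<\a$ by choice of $\b_0$. For the coordinates at $m+1$: by \eqref{J5} applied at $i=m$ (here $m\in\dom\xi$, $m+1\in\dom\eta$, and $\eta_1(m)=\eta_1(m+1)$ since $\eta_1(m+1)=\xi_1(m)=\eta_1(m)$), we get $\eta_2(m+1)=\xi_2(m)$, $\eta_3(m+1)=\xi_3(m)$, $\eta_4(m+1)=\xi_4(m)$, and $\xi_1(m)$ is already a coordinate value of $\xi$; all four of these are values of $\xi$, hence bounded by $\b_0<\a$. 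Therefore $\b=\b_0<\a$, and $\eta\in J^{\a}_f$ as required.

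The step I expect to be the main (if minor) obstacle is the bookkeeping at the index $m=\max\dom\xi$: one has to be careful that condition \eqref{J5} can be invoked there, which requires $\eta_1(m)=\eta_1(m+1)$, and this is exactly what the hypothesis $\eta_1(k)=\xi_1(\max\dom\xi)$ at $k=m+1$ combined with $\xi_1(m)=\eta_1(m)$ (since $\xi\subsetneq\eta$) delivers. Once that link is made, propagating constancy of coordinates $2,3,4$ along the tail of $\eta$ via \eqref{J5} is routine induction, and the fact that coordinate $5$ plays no role in $J^{\a}_f$ is what makes the unbounded fifth coordinate harmless.
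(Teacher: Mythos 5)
The bound you derive on coordinates $2$--$4$ via \eqref{J5} is exactly what the paper does, but there is a genuine gap where you dismiss the fifth coordinate. Membership in $J^{\a}_f$ requires $\ran\eta\subset\o\times(\b+1)^4$ for some $\b<\a$, and since the ambient product is $\o\times\k\times\k\times\k\times\k$, the factor $(\b+1)^4$ bounds the four $\k$-valued coordinates $\eta_2,\eta_3,\eta_4,\eta_5$; it is the $\o$-valued coordinate $\eta_1$ that is unconstrained, not $\eta_5$. (That $\eta_5$ must be bounded is also what the surrounding text relies on: Claim~2 infers $|J^\a_f|<\k$ from $J^\a_f\subset(\o\times\a^4)^{\le\o}$, and $(\circledast)$ asserts $\sup\ran\eta_5=\a$ for branches leaving $J^\a_f$ exactly at stage $\a$ --- neither would hold if $\eta_5$ could run up to $\k$ inside $J^\a_f$.) So your argument as written leaves $\eta_5$ on the tail completely unbounded, and the claim would simply be false if that coordinate really were free.

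The missing ingredient is condition \eqref{J6}. Put $m=\max\dom\xi$ and $k_0=\xi_1(m)>0$. By \eqref{J1} the function $\eta_1$ is non-decreasing, so $\eta_1^{-1}\{k_0\}$ is an interval $[i,j)$ containing $m$ and all of $\dom\eta\setminus\dom\xi$; condition \eqref{J6} then places $\eta_5\rest[i,j)$ inside $P^{\eta_2(i),\eta_3(i)}_{\eta_4(i)}$, and the trees $P^{\a,\b}_\g$ consist by construction of increasing functions with values in $[\a,\b)$. Hence $\eta_5(k)<\eta_3(i)=\xi_3(i)\le\b_0$ for every $k$ in the tail, where $\b_0<\a$ witnesses $\xi\in J^\a_f$. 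Adding this one step, and keeping your \eqref{J5}-propagation for coordinates $2$--$4$, turns your proposal into the paper's proof.
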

  \begin{proofVOf}{Claim 1}
    Suppose $\xi,\eta\in J^{\a}_f$ are as in the assumption. Let us define 
    $\b_2=\xi_2(\max\dom\xi)$, $\b_3=\xi_2(\max\dom\xi)$, and
    $\b_4=\xi_4(\max\dom\xi)$. Because $\xi\in J^{\a}_f$, there is $\b$ such that
    $\b_2,\b_3,\b_4<\b+1$ and $\b<\a$.
    Now by \eqref{J5} $\eta_2(k)=\b_2$, $\eta_3(k)=\b_3$ and $\eta_4(k)=\b_4$, for all $k\in \dom\eta\setminus\dom\xi$. 
    Then by \eqref{J6} for all 
    $k\in\dom\eta\setminus\dom\xi$ we have that 
    $\b_2<\eta_5(k)<\b_3<\b+1$. Since $\xi\in J^{\a}_f$,
    also $\b_4<\b+1$, so $\eta\in J^{\a}_f$. 
  \end{proofVOf}

  \begin{claim}{2}
    $|J_f|=\k$, $(J^\a_f)_{\a<\k}$ is a $\k$-filtration of $J_f$
    and if $f\in\l^\k$ and $\I$ is a $\k$-filtration of $J_f$, then $f_{\I,J_f}\sim  f$.
  \end{claim}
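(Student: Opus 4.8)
The plan is to verify the three assertions in order, the last being the substantial one.

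First I would settle cardinality and the filtration property. Since $\k^{<\k}=\k$ and $\o<\k$ we have $\k^\o=\k$, so $|J_f|\le|(\o\times\k^4)^{\le\o}|=\k$; conversely, for each $\b<\k$ the length-$1$ sequence $\eta$ with $\eta(0)=(0,0,0,0,\b)$ satisfies all of \eqref{J-1}--\eqref{J7} — condition \eqref{J6} because the one-point function $\eta_5$ lies in $I_f=P^{0,0}_0$ — so $|J_f|=\k$. For the filtration: each $J^\a_f$ is downward closed by \eqref{J-1}; monotonicity in $\a$ and continuity at limit stages are immediate from the definition of $J^\a_f$; $\Cup_{\a<\k}J^\a_f=J_f$ holds because $\k=\l^+$ is regular, so the at most countably many ordinals occurring as coordinates of a given $\eta\in J_f$ are bounded by some $\b<\k$, whence $\eta\in J^{\b+1}_f$; and $|J^\a_f|<\k$ because $J^\a_f\subseteq\Cup_{\b<\a}(\o\times(\b+1)^4)^{\le\o}$ is a union of $\le\l$ sets each of size $\le\l^\o=\l$ — this is the one spot where the hypothesis $\l^\o=\l$ is used.

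For $f_{\I,J_f}\sim f$ the plan is to reduce to one convenient filtration and then do a coordinate analysis. By Lemma~\ref{lemma:FiltrEquiv} applied to the identity isomorphism $J_f\to J_f$, any two $\k$-filtrations of $J_f$ give $\sim$-equivalent functions, so it is enough to show $f_{\J,J_f}(\a)=f(\a)$ for $\J=(J^\a_f)_{\a<\k}$ and all $\a\in S^\k_\o$; this suffices since $S^\k_\o$ contains an $\o$-cub, namely the set of $\o$-cofinal limit points of any cub. Fix $\a\in S^\k_\o$ and examine the set $B_\a$ of $\eta\in J_f$ all of whose proper initial segments lie in $J^\a_f$ while $\eta\notin J^\a_f$; each such $\eta$ has domain $\o$. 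By \eqref{J1} the coordinate $\eta_1$ is non-decreasing with values in $\o$, hence eventually constant or unbounded. If $\eta_1$ were eventually constant at a value $k_0\ge1$, then Claim~1 applied to a long enough initial segment $\xi\subsetneq\eta$ would put $\eta$ back into $J^\a_f$, a contradiction; so an eventually constant $\eta\in B_\a$ has $\eta_1\equiv0$, and then \eqref{J3} and \eqref{J6} make $\eta_5$ strictly increasing into $\k$, $(\circledast)$ gives $\sup\ran\eta_5=\a$, and \eqref{J7} forces $c_f(\eta)=f(\a)$. If $\eta_1$ is unbounded, then $\eta_5$ is strictly increasing by \eqref{J2}, $(\circledast)$ again gives $\sup\ran\eta_5=\a$, and \eqref{J7}(b) forces $c_f(\eta)=f(\a)$. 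Thus every element of $B_\a$ has colour $f(\a)$; and $B_\a\ne\es$ because, choosing a strictly increasing $(\g_n)_{n<\o}$ cofinal in $\a$, the branch $n\mapsto(0,0,0,0,\g_n)$ verifies \eqref{J0}--\eqref{J7} and lies in $B_\a$. Hence $f_{\J,J_f}(\a)=f(\a)$ by clause (a) of Definition~\ref{def:FuncT}, completing the claim.

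The main obstacle is the colour-uniformity of $B_\a$: one must be sure that no branch whose $\eta_1$-coordinate is eventually a positive constant belongs to $B_\a$, since the colour of such a branch is dictated by one of the auxiliary subtrees $P^{\a',\b'}_{\g'}$ rather than by $f$, and its presence would destroy clause (a) of Definition~\ref{def:FuncT}. That exclusion is exactly the point of Claim~1, and is why $J^\a_f$ is defined via $(\b+1)$-blocks. The remainder — checking that the exhibited branch meets every condition \eqref{J0}--\eqref{J7}, and that $(\#)$ and $(\circledast)$ apply in each case — is routine bookkeeping with the five coordinates, but it is there that essentially all of the clauses in the definition of $J_f$ get used.
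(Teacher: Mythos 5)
Your proof is correct and follows essentially the same route as the paper's: verify the cardinality and filtration properties directly, reduce to the canonical filtration $(J^\a_f)_{\a<\k}$ via Lemma~\ref{lemma:FiltrEquiv}, and then use Claim~1 together with $(\circledast)$ to rule out branches whose colour is dictated by a $P^{\a,\b}_\g$ with positive first coordinate, so that every branch in $B_\a$ gets colour $f(\a)$. Your explicit checks that $|J_f|=\k$ from below and that $B_\a\ne\es$ (needed for clause (a) of Definition~\ref{def:FuncT} to fire) are points the paper leaves implicit, and are welcome.
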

  \begin{proofVOf}{Claim 2}
    For all $\a<\k$,
    $J^\a_f\subset (\o\times\a^{4})^{\le\o}$, so by the cardinality assumption of the lemma, 
    the cardinality of $J^\a_f$ is $<\k$.
    Clearly $\a<\b$ implies $J^{\a}_f\subset J^{\b}_f$. Continuity is verified by
    \begin{eqnarray*}
      \Cup_{\a<\g}J^{\a}_f&=&\{\eta\in J_f\mid\exists\a<\g,\exists\b<\a(\ran\eta\subset \o\times (\b+1)^4)\}\\
      &=&\{\eta\in J_f\mid\exists\b<\cup\g(\ran\eta\subset \o\times (\b+1)^4)\}
    \end{eqnarray*}
    which equals $J^\g_f$ if $\g$ is a limit ordinal.

    By Lemma \ref{lemma:FiltrEquiv} it is enough to show that $f_{\I,J_f}\sim  f$ for
    $\I=(J^\a_f)_{\a<\k}$, and we will show that if $\I=(J^\a_f)_{\a<\k}$, then 
    for all $\o$-cofinal ordinals $\a$ we have $f_{\I,J_f}(\a)=f(\a)$.

    Suppose $\a$ is $\o$-cofinal and suppose that $\eta$ is such that
    $\eta\notin J^\a_f$, $\eta\rest k\in J^\a_f$, $k<\o$. By Claim 1 $\eta$ can
    satisfy (a) of \eqref{J7} only if $\eta_1(n)=0$ for all $n<\o$.
    In that case $\eta=(0,0,0,0,\eta_5)$ and $\eta_5$ is in $P^{0,0}_0=I_f$ 
    and by the definition in \eqref{J7}(a) we have $c_f(\eta)=d_f(\eta_5)$, 
    which is by definition $d_f(\eta_5)=f(\sup\ran \eta_5)=f(\a)$ (see the definition of
    $P^{0,0}_0$ and $(I_f,d_f$) above. 

    Else, if (b) of \eqref{J7} is satisfied, then again $c_f(\eta)=f(\sup\ran\eta_5)$
    which is by  $(\circledast)$ equal to $f(\a)$. So that means that the color of all such 
    $\eta$ is $f(\a)$ and thus in defining $f_{\I,J_f}(\a)$ we use
    the condition (b) of Definition~\ref{def:FuncT} and get $f_{\I,J_f}(\a)=f(\a)$.
  \end{proofVOf}

  \begin{claim}{3}
    Suppose $f\sim g$. Then $J_f\cong J_g$.  
  \end{claim}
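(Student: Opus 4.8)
The plan is to prove Claim 3 by constructing, from the hypothesis $f \sim g$, an explicit color-preserving isomorphism $F \colon J_f \to J_g$. Fix an $\o$-cub $C \subset S^\k_\o$ contained in $\{\a < \k \mid f(\a) = g(\a)\}$, and enumerate it increasingly as $C = \{a_i \mid i < \k\}$ with $a_0 = 0$ (or handle the initial segment below $\min C$ separately, where $f$ and $g$ need not agree but the relevant leaves are the ``(b)'' leaves with $\sup\ran\eta_5 < \min C$ — those we must treat with care). The idea, following the modification of Lemma~4.89 of \cite{FHK} advertised before Definition~\ref{def:CT}, is to build $F$ level-by-level on the blocks determined by the filtration $(J^{a_i}_f)_{i<\k}$ and $(J^{a_i}_g)_{i<\k}$, using the fact that on each block $[a_i, a_{i+1})$ the two ``pieces'' of the tree are isomorphic as colored trees because (i) the subtrees $P^{\a,\b}_\g$ appearing are, by construction, enumerated so that every color-preserving isomorphism type recurs cofinally, and (ii) the only leaves whose color is read off from $f$ resp.\ $g$ directly (via clause \eqref{J7}(b)) are those $\eta$ with $\sup\ran\eta_5 = a_j$ for some $j$, where by $a_j \in C$ we have $f(a_j) = g(a_j)$.

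First I would set up the block decomposition: for each $i < \k$ let $J^{[i]}_f = \{\eta \in J^{a_{i+1}}_f \mid \eta \restriction k \in J^{a_i}_f$ for all $k, \ \eta \notin J^{a_i}_f\} \cup (\text{stuff entirely inside } J^{a_i}_f)$ — more precisely I would describe $J_f$ as $J^{a_i}_f$ together with, for each node $\xi$ on the ``frontier'', the cone of extensions of $\xi$ that first leave $J^{a_i}_f$. By Claim~1, once a branch stays at a fixed nonzero value of $\eta_1$ it cannot escape the current $J^\a_f$; so a branch that is in $J^{a_{i+1}}_f \setminus J^{a_i}_f$ must increment $\eta_1$ at least once while its coordinates $\eta_2, \eta_3, \eta_4, \eta_5$ live in the band $[a_i, a_{i+1})$. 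This means the ``new'' part of the tree between levels $a_i$ and $a_{i+1}$ is, branch by branch, a disjoint union of copies of trees of the form $P^{\a,\b}_\g$ with $a_i \le \a \le \b < a_{i+1}$ glued in via clauses \eqref{J4}–\eqref{J6}, plus the terminal color data from \eqref{J7}. Since the enumeration $\{P^{\a,\b}_\g \mid \g < \k\}$ lists every colored isomorphism type cofinally, whatever colored subtree appears in $J_f$'s band can be matched by one appearing in $J_g$'s band and vice versa; one builds a back-and-forth matching of the frontier nodes and their attached $P$-subtrees. The colors coming from \eqref{J7}(a) are intrinsic to the $P$-subtree, hence automatically preserved; the colors from \eqref{J7}(b) equal $f(a_i) = g(a_i)$ on this band (using $(\circledast)$ together with $a_i \in C$), hence also preserved.

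Then I would assemble these block isomorphisms into a global $F$ by recursion on $i < \k$, taking unions at limits (the filtrations are continuous, so this is legitimate) and verifying that the coherence conditions at the frontier — matching $F$ on $J^{a_i}_f$ with the block map on the band $[a_i, a_{i+1})$ — can be maintained; this is a standard bookkeeping argument of exactly the kind in \cite{FHK}, choosing at stage $i$ an isomorphism of bands that extends the already-built $F \restriction J^{a_i}_f$, which is possible precisely because the $P$-enumeration has cofinal repetition of each isomorphism type, so there is always ``fresh'' room to match. Finally I would check that $F$ is color preserving on all leaves: the leaves of $J_f$ are the $\eta$ with $\dom\eta = \o$, their colors are governed by \eqref{J7}(a) or (b), and both cases were handled band-by-band.

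The main obstacle I anticipate is the careful treatment of branches that never increment $\eta_1$ past a bounded value — equivalently, branches with $\ran\eta_1 = \{0\}$ or eventually constant $\eta_1$ — because these are exactly the branches governed by $P^{0,0}_0 = (I_f, d_f)$ and by clause \eqref{J7}(a), and they do not respect the band decomposition (a single such branch can have $\sup\ran\eta_5$ equal to an arbitrary $\o$-cofinal ordinal $\a$). Here one must use that $C$ is $\o$-\emph{cub}: for such a branch with $\sup\ran\eta_5 = \a$, closure of $C$ (together with $\a$ being $\o$-cofinal and the branch being an $\o$-sequence) lets one find $C$-points cofinal in $\a$, and then $f(\a) = g(\a)$ follows either directly (if $\a \in C$) or by the argument that a branch with $\sup = \a \notin C$ forces a contradiction with the frontier-color computation, exactly as in the proof of Claim~2 of \cite{FHK}'s Lemma~4.89 adapted to the $\l$-chromatic setting. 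In other words, the delicate point is showing that the only colored-tree isomorphism invariant that $J_f$ ``remembers'' is the $\sim$-class of $f$, and conversely that $f \sim g$ supplies enough agreement to run the back-and-forth past every $\o$-cofinal ordinal; I would model this step closely on the cited lemma, replacing ``leaf present / absent'' by ``leaf has color $\b$'' throughout.
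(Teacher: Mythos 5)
Your proposal follows essentially the same route as the paper: a transfinite back-and-forth along the filtration $(J^{\a}_f)_{\a<\k}$ indexed by (successors of) points of a cub $C$ contained in the agreement set, extending color-preserving partial isomorphisms block-by-block via the cofinal enumeration of the colored trees $P^{\a,\b}_\g$, and invoking $f(\a)=g(\a)$ for $\a\in C$ exactly for the frontier branches whose $\sup\ran\eta_5$ lands on a filtration boundary (the paper's case $(\iota\iota\iota)$). Your one local slip --- leaves falling under clause \eqref{J7}(b) that complete strictly inside a band have color $f(\sup\ran\eta_5)$ for an arbitrary $\o$-cofinal ordinal of that band, not $f(a_i)$ --- is harmless, since such leaves are already absorbed into the colored $P$-subtree matching you describe in the preceding sentence, which is precisely how the paper's case $(\iota\iota)$ handles them.
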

  \begin{proofVOf}{Claim 3}
    Let $C'\subset \{\a<\k\mid f(\a)=g(\a)\}$ be the $\o$-cub set which exists by the assumption
    and let $C$ be its closure under limits of uncountable cofinality. We will build a back-and-forth system
    to find the isomorphism. By induction on $i<\k$ we will define $\a_i$ and $F_{\a_i}$ such that:
    \begin{myAlphanumerate}
    \item If $i<j<\k,$ then $\a_i<\a_j$ and $F_{\a_i}\subset F_{\a_j}$.
    \item If $i$ is a successor, then $\a_i$ is a successor and if $i$ is limit, then $\a_i\in C$.
    \item If $\g$ is a limit ordinal, then $\a_\g=\sup_{i<\g}\a_i$.
    \item $F_{\a_i}$ is a color preserving partial isomorphism $J_f\to J_g$.
    \item Suppose that $i=\g+n$, where $\g$ is a limit ordinal or $0$ and $n<\o$ is even. Then
      $\dom F_{\a_i}=J^{\a_i}_f$.
    \item If $i=\g+n$, where $\g$ is a limit ordinal or 0 and $n<\o$ is odd, then
      $\ran F_{\a_i}=J^{\a_i}_g$. 
    \item If $\dom \xi<\o$, $\xi\in \dom F_{\a_i}$,
      $\eta\rest\dom\xi=\xi$ and $(\forall k\ge\dom\xi)\big(\eta_1(k)=\xi_1(\max\dom\xi)\land \eta_1(k)>0\big)$, then
      $\eta\in \dom F_{\a_i}$. Similarly for $\ran F_{\a_i}$.
    \item If $\xi\in\dom F_{\a_i}$ and $k<\dom \xi$, then $\xi\rest k\in \dom F_{\a_i}$.
    \item For all $\eta\in \dom F_{\a_i}$, $\dom\eta=\dom (F_{\a_i}(\eta))$.
    \end{myAlphanumerate}
    
    \noindent\textbf{The first step.} 
    The first step and the successor steps are similar, but the first step is easier. Thus we give it separately
    in order to simplify the readability.
    Let us start with $i=0$. Let $\a_0=\b+1$, for arbitrary $\b\in C$. Let us denote by 
    $$\rusi(\a)$$
    the ordinal $\o\cdot\a^4$ that is order isomorphic to $\o\times\a^4\subset \o\times\k^4$ (the order
    on the latter is defined in the beginning of this section).
    Let $\g$ be such that there is a (color preserving) isomorphism $h\colon P^{0,\rusi(\a_0)}_{\g}\cong J^{\a_0}_f$ 
    and such that $\rusch(P^{0,\a_0}_{\g})=0$. Such exists by \eqref{J0}.
    Suppose that $\eta\in J^{\a_0}_f$. Note that because $P^{0,\a_0}_{\g}$ and $J^{\a_0}_f$ are closed 
    under initial segments and by the definitions of $\rusch$ and $P^{\a,\b}_\g$, we have $\dom h^{-1}(\eta)=\dom \eta$.
    Define $\xi=F_{\a_0}(\eta)$ such that $\dom\xi=\dom\eta$ 
    and for all $k<\dom \xi$
    \begin{myItemize}
      \item $\xi_1(k)=1$,
      \item $\xi_2(k)=0$,
      \item $\xi_3(k)=\rusi(\a_0)$,
      \item $\xi_4(k)=\g$,
      \item $\xi_5(k)=h^{-1}(\eta)(k)$.
    \end{myItemize}
    Let us check that $\xi\in J_g$. Conditions \eqref{J0}-\eqref{J5} and \eqref{J7} are satisfied because
    $\xi_k$ is constant for all $k\in \{1,2,3,4\}$, $\xi_1(i)\ne 0$ for all $i$ and $\xi_5$ is increasing. For \eqref{J6}, if
    $\xi_1^{-1}\{k\}$ is empty, the condition is verified since each $P^{\a,\b}_\g$ is closed under initial segments 
    and contains the empty function. If it is non-empty, then $k=1$ and in that case $\xi_1^{-1}\{k\}=[0,\o)$ and
    by the argument above ($\dom h^{-1}(\eta)=\dom \eta=\dom\xi$) we have
    $\xi_5=h^{-1}(\eta)\in P^{0,\rusi(\a_0)}_\g=P^{\xi_2(0),\xi_3(0)}_{\xi_4(0)}$, so the condition is satisfied.
    The colors are preserved, because $h$ is an isomorphism.
    
    Let us check whether all the conditions (a)-(i) are met. In (a), (b), (c) 
    and (f) there is nothing to check.
    (d) holds, because $h$ is an isomorphism. (e) and (i) are immediate from the definition. 
    Both $J^{\a_0}_f$ and $P^{0,\rusi(\a_0)}_\g$
    are closed under initial segments, so (h) follows, because $\dom F_{\a_0}=J^{\a_0}_f$ and 
    $\ran F_{\a_0}=\{1\}\times \{0\}\times \{\rusi(\a_0)\}\times \{\g\}\times P^{0,\a_0}_{\g}$.
    Claim~1 implies (g) for $\dom F_{\a_0}$. Suppose $\xi\in \ran F_{\a_0}$ and $\eta\in J_g$ are as in the assumption of (g).
    Then $\eta_1(i)=\xi_1(i)=1$ for all $i<\dom \eta$. By \eqref{J5} it follows that 
    $\eta_2(i)=\xi_2(i)=0$,
    $\eta_3(i)=\xi_3(i)=\rusi(\a_0)$ and
    $\eta_4(i)=\xi_4(i)=\g$
    for all $i<\dom \eta$, so by \eqref{J6} $\eta_5\in P^{0,\rusi(\a_0)}_\g$ and since $h$ is an isomorphism,
    $\eta\in\ran F_{\a_0}$.\\

    \noindent\textbf{Odd successor step.}
    We want to handle odd case first, 
    because the most important case is the successor of a limit ordinal, see $(\iota\iota\iota)$
    below. Except that, the even case is similar to the odd case.

    Suppose that $j<\k$ is a successor ordinal. Then there exist $\b_j$ and $n_j$ such that
    $j=\b_j+n_j$ and $\b_j$ is a limit ordinal or $0$. Suppose that $n_j$ is odd and
    that $\a_l$ and $F_{\a_l}$ are defined for all $l<j$ such that the conditions (a)--(i) and \eqref{J0}--\eqref{J2}
    hold for $l<j$.
    
    Let $\a_j=\b+1$ where $\b$ is such that $\b\in C$, $\ran F_{\a_{j-1}}\subset J^{\b}_g$ and $\b>\a_{j-1}$.
    For convenience define $\xi(-1)=(0,0,0,0,0)$ for all $\xi\in J_f\cup J_g$.
    Suppose $\eta\in \ran F_{\a_{j-1}}$ has finite domain $\dom\eta=m<\o$ and denote $\xi=F^{-1}_{\a_{j-1}}(\eta)$. 
    Fix $\g_\eta$ to be such that $\rusch(P^{\a,\b}_{\g_\eta})=m$ and
    such that there is an isomorphism 
    $h_\eta\colon P^{\a,\b}_{\g_\eta}\to W,$
    where 
    $$W=\{\zeta\mid \dom\zeta=[m,s), m<s\le\o, 
       \eta^{\frown}\la m,\zeta(m)\ra\notin 
       \ran F_{\a_{j-1}}, \eta^{\frown}\zeta\in J^{\a_j}_g\},$$
    $\a=\xi_3(m-1)+\xi_4(m-1)$ and $\b=\a+\rusi(\a_j)$ (defined in the beginning of the First step).

    We will define $F_{\a_{j}}$ so that its range is $J^{\a_{j}}_g$ and instead of $F_{\a_j}$ we will
    define its inverse. 
    So let $\eta\in J^{\a_j}_g$. We have three cases:
    \begin{myItemize}
    \item[($\iota$)] $\eta\in \ran F_{\a_{j-1}}$,
    \item[($\iota\iota$)] $\exists m<\dom\eta(\eta\rest m\in \ran F_{\a_{j-1}}\land \eta\rest(m+1)\notin F_{\a_{j-1}})$,
    \item[($\iota\iota\iota$)] $\forall m<\dom\eta(\eta\rest(m+1)\in \ran F_{\a_{j-1}}\land \eta\notin \ran F_{\a_{j-1}})$.
    \end{myItemize}
    Let us define $\xi=F^{-1}_{\a_j}(\eta)$ such that $\dom\xi=\dom\eta$. If ($\iota$) holds, define
    $\xi(n)=F^{-1}_{\a_{j-1}}(\eta)(n)$ for all $n<\dom\eta$. If $\dom \eta=\o$, then 
    clearly $c_f(\xi)=c_g(\eta)$ by the induction hypothesis (specially (d)).
    Suppose that ($\iota\iota$) holds
    and let $m$ witness this. For all $n<\dom \xi$ let
    \begin{myItemize}
    \item If $n<m$, then $\xi(n)=F^{-1}_{\a_{j-1}}(\eta\rest m)(n)$.
    \item Suppose $n\ge m$. Let 
      \begin{myEnumerate}
      \item[$\cdot$] $\xi_1(n)=\xi_1(m-1)+1$,
      \item[$\cdot$] $\xi_2(n)=\xi_3(m-1)+\xi_4(m-1)$,
      \item[$\cdot$] $\xi_3(n)=\xi_2(m)+\rusi(\a_j)$,
      \item[$\cdot$] $\xi_4(n)=\g_{\eta\restl m}$,
      \item[$\cdot$] $\xi_5(n)=h_{\eta\restl m}^{-1}(\eta)(n)$.
      \end{myEnumerate}
    \end{myItemize}
    Next we should check that $\xi\in J_f$ and if $\dom \eta=\o$, also that $c_f(\xi)=c_g(\eta)$; 
    let us check items \eqref{J0} and \eqref{J6}, the rest are left to the reader.
    \begin{myItemize}
    \item[\eqref{J0}] By the induction hypothesis $\xi\rest m$ is increasing. Next,
      $\xi_1(m)=\xi_1(m-1)+1$, so $\xi(m-1)<_{\text{lex}}\xi(m)$. If $m\le n_1<n_2$,
      then $\xi_k(n_1)=\xi_{k}(n_2)$ for all $k\in\{1,2,3,4\}$ and $\xi_5$ is increasing.
    \item[\eqref{J6}] Suppose that $[i,j)=\xi_1^{-1}\{k\}$. Since $\xi_1\rest [m,\o)$ is constant,
      either  $j<m$, when we are done by the induction hypothesis, or $i=m$ and $j=\o$. In that case
      one verifies that $\eta\rest[m,\o)\in W=\ran h_{\eta\restl m}$ and then, imitating
      the corresponding argument in the first step, that 
      $$\xi_5\rest [m,\o)=h_{\eta\restl m}^{-1}(\eta\rest [m,\o))$$
      and hence in $\dom h_{\eta\restl m}=P^{\xi_2(m),\xi_3(m)}_{\xi_4(m)}$.
    \end{myItemize}   
    
    Suppose finally that ($\iota\iota\iota$) holds. Then $\dom\eta$ must be $\o$ since 
    otherwise the condition ($\iota\iota\iota$)
    is simply contradictory 
    (because $\eta\rest(\dom\eta-1+1)=\eta$ (except for the case $\dom\eta=0$, 
    but then condition ($\iota$) holds and we are done)).
    By (g) of the induction hypothesis, 
    we have $\ran\eta_1=\o$, because otherwise we had $\eta\in \ran F_{\a_{j-1}}$.
    Let $F^{-1}_{\a_j}(\eta)=\xi=\Cup_{n<\o}F^{-1}_{\a_{j-1}}(\eta\rest n)$. 

    Evidently $\xi\rest n$ is in $J_f$ for all $n<\o$, so $\xi\in J_f$ by the remark after
    \eqref{J2}.
    Let us check that $c_f(\xi)=c_g(\eta)$. 

    First of all $\xi$ cannot be in $J^{\a_{j-1}}_f$, since
    otherwise, by (d) and (i), 
    $$F_{\a_{j-1}}(\xi)=\Cup_{n<\o}F_{\a_{j-1}}(\xi\rest n)=\Cup _{n<\o}\eta\rest n=\eta$$ 
    were again in $\ran F_{\a_{j-1}}$. But $\xi\rest n$ is in $J^{\a_{j-1}}_f$, so by the definition
    of $J^{\a}_f$, $\a_{j-1}$ must be a limit ordinal, for otherwise also $\xi$ were in $J^{\a_{j-1}}_f$.
    Now by (b),
    $\a_{j-1}$ is a limit ordinal in $C$ and by (a), (e) and (f),
    $\ran F_{\a_{j-1}}=J^{\a_{j-1}}_g$ and $\dom F_{\a_{j-1}}=J^{\a_{j-1}}_f$. This implies 
    that $\ran\eta\not\subset \o\times \b^4$ for any $\b<\a_{j-1}$ 
    and by ($\circledast$) on page \pageref{circledast} we must have $\sup\ran\eta_5=\a_{j-1}$,
    so in particular $\a_{j-1}$ has cofinality $\o$. Therefore $c_g(\eta)=f(\a_{j-1})$.
    by \eqref{J7}. 
    Since $\a_{j-1}\in C$, we have
    $f(\a_{j-1})=g(\a_{j-1})$. Again by $(\circledast)$ and that $\dom F_{\a_{j-1}}=J^{\a_{j-1}}_f$ by (e), 
    we have $\sup\ran\xi_5=\a_{j-1}$ and $c_f(\xi)=f(\a_{j-1})=c_g(\eta)$, 
    thus the colors match. 

    Let us check whether all the conditions (a)-(i) are met. (a), (b), (c) are 
    common to the cases ($\iota$), ($\iota\iota$) 
    and ($\iota\iota\iota$) in the definition of $F_{\a_j}^{-1}$ and are easy to verify. 
    Let us sketch a proof for (d); the rest is left to the reader.
    \begin{myAlphanumerate}
    \item[(d)] We have already checked that the colors are preserved in the non-trivial cases.
      Let $\eta_1,\eta_2\in \ran F_{\a_{j}}$ and let us show that 
      $$\eta_1\subsetneq\eta_2\iff F^{-1}_{\a_j}(\eta_1)\subsetneq F^{-1}_{\a_j}(\eta_2).$$
      The case where both $\eta_1$ and $\eta_2$ satisfy $(\iota\iota)$ is the interesting one (implies all the others).
      
      So suppose $\eta_1,\eta_2\in (\iota\iota)$. Then there exist
      $m_1$ and $m_2$ as described in the statement of ($\iota\iota$).
      Let us show that $m_1=m_2$. We have $\eta_1\rest (m_1+1)=\eta_2\rest (m_1+1)$ 
      and $\eta_1\rest (m_1+1)\notin \ran F_{\a_{j-1}}$, 
      so $m_2\le m_1$. If $m_2\le m_1$, then $m_2<\dom\eta_1$, 
      since $m_1<\dom \eta_1$.  Thus if $m_2\le m_1$, then
      $\eta_1\rest (m_2+1)=\eta_2\rest (m_2+1)\notin \ran F_{\a_{j-1}}$, 
      which implies $m_2=m_1$.  According to the
      definition of $F^{-1}_{\a_j}(\eta_i)(k)$ for $k<\dom \eta_1$,
      $F^{-1}_{\a_j}(\eta_i)(k)$ depends only on $m_i$ and $\eta\rest m_i$ 
      for $i\in \{1,2\}$. Since $m_1=m_2$ and $\eta_1\rest m_1=\eta_2\rest m_2$, we have
      $F^{-1}_{\a_j}(\eta_1)(k)=F^{-1}_{\a_j}(\eta_2)(k)$ for all
      $k<\dom\eta_1$.

      Let us now assume that $\eta_1\not\subset \eta_2$. Then take the smallest $n\in \dom\eta_1\cap\dom\eta_2$ such that
      $\eta_1(n)\ne \eta_2(n)$. It is now easy to show that 
      $F^{-1}_{\a_j}(\eta_1)(n)\ne F^{-1}_{\a_j}(\eta_2)(n)$ by the construction.
    \end{myAlphanumerate}
    \noindent\textbf{Even successor step.} Namely the one where $j=\b+n$, $\b$ is limit and $n$ is even and $n>0$.
    But this case goes exactly as the odd successor step when it is not the successor of a limit, 
    except that we start with $\dom F_{\a_j}=J^{\a_j}_f$
    where $\a_j$ is big enough successor of an element of $C$ such that $J^{\a_j}_f$ contains $\ran F_{\a_{j-1}}$
    and define $\xi=F_{\a_j}(\eta)$. Instead of (e) we use (f) as the induction hypothesis. 
  
    \noindent\textbf{Limit step.}
    Assume that $j$ is a limit ordinal. Then let $\a_j=\Cup_{i<j}\a_i$ and $F_{\a_j}=\Cup_{i<j}F_{\a_i}$.
    Since $\a_i$ are successors of ordinals in $C$, $\a_j\in C$, so (b) is satisfied. 
    Since each $F_{\a_i}$ is an isomorphism, 
    also their union is, so (d) is satisfied. 
    Because conditions (e), (f) and (i) hold for $i<j$, the conditions (e) and (i)
    hold for $j$. (f) is satisfied because the premise is not true.
    (a) and (c) are clearly satisfied. Also (g) and (h) are satisfied by Claim~1 since now $\dom F_{\a_j}=J^{\a_j}_f$
    and $\ran F_{\a_j}=J^{\a_j}_g$ (this is because (a), (e) and (f) hold for $i<j$).

    \noindent\textbf{Finally} $F=\Cup_{i<\k}F_{\a_i}$ is an isomorphism between $J_f$ and $J_g$.
  \end{proofVOf}
\end{proofV}

\begin{Def}\label{def:trees}
  Let $K(\k^+,\o+\o+2)$ be the class of those $\k^+,(\o+\o+2)$-trees which
  have the following properties:
  \begin{myItemize}
  \item every node on level $<\o+\o$ has infinitely many immediate successors,
  \item for every node $x$ there exists $y$ on level $\o+\o$ (a leaf) such that $x<y$.
  \end{myItemize}
\end{Def}

\begin{Thm}[$V=L$, $\k=\l^+$, $\l^\o=\l$]\label{thm:Complete2}
  The isomorphism relation on $K(\k^+,\o+\o+2)$ is $\Sii$-complete.
\end{Thm}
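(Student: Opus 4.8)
The plan is to derive the theorem from the $\Sii$-completeness of the relation ``agreement on an $\o$-cub'' on $\l^\k$ by ``uncolouring'' the trees produced in Lemma~\ref{lem:StoJS}, using the second block of $\o$ levels of a $\k^+,(\o+\o+2)$-tree to record the colour of each leaf. Recall that by Theorem~\ref{thm:Complete1}, applied with $\mu=\o$ and (since $\k=\l^+$) $\theta=\l$, the relation $\sim$ on $\l^\k$ --- where $\eta\sim\xi$ iff $\{\a<\k\mid\eta(\a)=\xi(\a)\}$ contains an $\o$-cub --- is $\Sii$-complete, and that $f\mapsto J_f$ is a reduction of $\sim$ to isomorphism of coloured trees; moreover $f\mapsto J_f$ is Borel (indeed continuous), as is clear from its construction. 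Note also that each $J_f$ is a \emph{closed} $\k^+,(\o+2)$-tree in which every node on a level $<\o$ has infinitely many immediate successors and the colouring is defined exactly on the leaves, all of which lie on level $\o$; and note that, unwinding Definition~\ref{def:trees}, every maximal branch of a tree in $K(\k^+,\o+\o+2)$ has order type exactly $\o+\o+1$ (the two requirements force the level-$(\o+\o)$ nodes to be leaves). Since $\ISO(K(\k^+,\o+\o+2))$ is clearly $\Sii$, it suffices to produce a Borel reduction of $\sim$ to it, which we do in the form $f\mapsto U(J_f)$ for a suitable ``uncolouring'' map $U$.

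For $U$, I would fix once and for all a sequence $\langle R_\a\mid\a\in\l\setminus\{0\}\rangle$ of pairwise non-isomorphic \emph{pattern trees} --- trees of size $\le\k$ all of whose maximal branches have order type exactly $\o+1$ and in which every node on a level $<\o$ has infinitely many immediate successors --- chosen definably, e.g.\ as the $<_L$-least such $\l$-sequence. Given $(t,c)$ in the range of $J$, let $U(t,c)$ be the tree obtained from $t$ by grafting above each leaf $x$ of $t$ a fixed isomorphic copy of the pattern tree $R_{c(x)}$, the root of the copy being identified with $x$. The map $U$ is continuous, since $t$ and the (fixed) pattern trees are determined on a bounded part of $U(t,c)$ by a bounded part of $(t,c)$; hence $f\mapsto U(J_f)$ is continuous.

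It remains to check that $U$ is a reduction. First, $U(t,c)\in K(\k^+,\o+\o+2)$: every maximal branch now has order type $\o+(\o+1)=\o+\o+1$; every node on a level $<\o+\o$ is either a node of $t$ on a level $<\o$ or a node on a relative level $<\o$ of a grafted copy, so in either case has infinitely many successors; and every node has a leaf on level $\o+\o$ above it, because $t$ is closed (so every node of $t$ has a leaf of $t$ above it) and every pattern tree, being of height $\o+1$, has a leaf on its level $\o$ above each of its nodes. Second --- and this is the key point --- since isomorphisms of trees preserve levels and the set of nodes of $U(t,c)$ on levels $\le\o$ is exactly $t$ (the grafted copies live on levels $>\o$, except for their roots, which are the leaves of $t$), any isomorphism $F\colon U(t,c)\to U(t',c')$ restricts to an isomorphism $t\to t'$ and carries, for each leaf $x$ of $t$, the subtree above $x$, namely the copy of $R_{c(x)}$, onto the subtree above $F(x)$, namely the copy of $R_{c'(F(x))}$; as the pattern trees are pairwise non-isomorphic, this forces $c(x)=c'(F(x))$, so $F\rest t$ is a colour-preserving isomorphism and $(t,c)\cong(t',c')$. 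The converse is immediate: a colour-preserving isomorphism $(t,c)\to(t',c')$ extends to an isomorphism $U(t,c)\to U(t',c')$ via the canonical identifications of the grafted copies of $R_{c(x)}=R_{c'(g(x))}$. Composing, $f\mapsto U(J_f)$ is a continuous reduction of $\sim$ to $\ISO(K(\k^+,\o+\o+2))$, so the latter is $\Sii$-complete.

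The step I expect to require real work is producing the sequence $\langle R_\a\rangle$: one needs at least $\l$ pairwise non-isomorphic pattern trees of size $\le\k$, with the exact shape dictated by Definition~\ref{def:trees} above a leaf. This is the uncoloured, height-$(\o+1)$ analogue of what Lemma~\ref{lem:StoJS} (and the trees $J(S)$, $S\subseteq S^\k_\o$, of~\cite{FHK}) already accomplish with colours: one should be able to use the same filtration technique --- recording a two-valued invariant at the relevant $\o$-cofinal ordinals, but realising it so that every branch keeps order type exactly $\o+1$ --- to obtain $2^\k$ pairwise non-isomorphic pattern trees, one for each class modulo the $\o$-nonstationary ideal. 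Once this sequence is in hand the rest is bookkeeping, the conceptual content being that, colours having been coded by pairwise non-isomorphic pattern trees, the decoding is automatic because tree isomorphisms preserve height.
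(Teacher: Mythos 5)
Your proposal follows essentially the same route as the paper: reduce via Theorem~\ref{thm:Complete1} and Lemma~\ref{lem:StoJS} to isomorphism of coloured trees, then uncolour by grafting pairwise non-isomorphic ``pattern'' trees above the leaves and use the fact that tree isomorphisms preserve levels to recover a colour-preserving isomorphism of the base. That part of your argument is correct and matches the paper's proof.

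The one step you defer --- producing $\l$ pairwise non-isomorphic pattern trees --- is where you have made trouble for yourself by misreading Definition~\ref{def:trees}. The two conditions there do \emph{not} force every maximal branch to have order type $\o+\o+1$: a branch of order type $\o$ with no node above it is perfectly admissible, provided each of its nodes lies below \emph{some} leaf on level $\o+\o$ (via a different branch). Consequently the pattern trees only need to satisfy ``every node on a level $<\o$ has infinitely many immediate successors and every node lies below a level-$\o$ node''; they may, and in the standard construction must, have maximal branches of order type $\o$ without leaves. With this weaker requirement the trees are exactly the $J(S)$ trees of Lemma~4.89 of \cite{FHK} (for stationary $S\subset S^\k_\o$ in distinct classes modulo the nonstationary ideal), which is precisely what the paper cites; these have size $\le\l<\k$, which also cleans up your continuity argument. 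Under your stricter requirement that every maximal branch have order type exactly $\o+1$, that construction breaks down --- the invariant distinguishing the $J(S)$ is \emph{which} $\o$-branches lack leaves, so demanding that all of them have leaves collapses the family --- and it is not clear you could produce $\l$ pairwise non-isomorphic such trees of size $\le\k$ at all. So the ``real work'' you anticipate dissolves once the definition is read correctly, and the rest of your argument goes through as written.
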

\begin{proof}
  By Theorem \ref{thm:Complete1} every $\Sii$-equivalence relation can be reduced 
  to the equivalence relation on $\l^\k$ modulo the $\o$-non-stationary ideal.
  By Lemma \ref{lem:StoJS} this equivalence relation can embedded to the isomorphism
  relation on $\CT^\o_*$ (Definition \ref{def:CT}). So it remains to show that 
  the isomorphism relation on $\CT^\o_*$ can be embedded into the isomorphism relation on
  $K(\k^+,\o+\o+2)$.
  
  Let $(t_i)_{i<\l}$ be a sequence of non-isomorphic $\k^+,(\o+2)$-trees 
  such that every element has infinitely many immediate successors and
  every element has a successor at level $\o$.
  These can be obtained for example by Lemma 4.89 in 
  \cite{FHK} which is essentially the same as Lemma~\ref{lem:StoJS}
  above but with the number of colors $\l$ replaced by $2$ (a branch either has a leaf or not).
  Let $(t,c)\in \CT^\o_*$. 
  Let $F(t,c)$ be the tree obtained from $t$ as follows: if $x\in t$ has height $\o$, then
  by definition it has a color $\a<\l$, so replace the element $x$ by the tree $t_\a$.
  
  Clearly if colored trees $(t,c)$ and $(t',c')$ are isomorphic, then so are the trees
  $F(t,c)$ and $F(t',c')$. On the other hand if $g\colon F(t,c)\to F(t',c')$ is an isomorphism,
  then $g\rest t$ must be an isomorphism onto $t'$. Moreover it preserves colors, because
  if $c(x)\ne c'(g(x))$ for some $x$ of height $\o$ in $t$, then $g\rest \{y\in F(t,c)\mid \forall z<x(y>z)\}$ must 
  be an isomorphism onto $\{y\in F(t',c')\mid y>x\}$ which means that there is an isomorphism 
  between $t_{c(x)}$ and $t_{c'(g(x))}$ which is a contradiction.
  
  It remains to show that $(t,c)\mapsto F(c,t)$ is continuous.
  But this follows from the fact that $|t_\a|\le |t_\l|\le |\l^{<\o}|=\l<\k$.
  %
\end{proof}

\begin{Def}\label{def:Too}
  Let $A=\o^{\o+\o}$ and for $\a<\o+\o$, let $E_{\a}$ be the equivalence relation on $A$ such that
  let $(\eta,\xi)\in E_{\a}\iff \eta\rest\a=\xi\rest\a$. 
  Let $\A$ be a model of the vocabulary $(E_n)_{n<\o+\o}$ 
  with domain $A$ and all these equivalence relations interpreted as explained above. Then
  denote the complete theory of $\A$, $T(\A)$ by $T_{\o+\o}$. 
\end{Def}

\begin{Fact}
  $T_{\o+\o}$ is stable and has no DOP nor OTOP. 
\end{Fact}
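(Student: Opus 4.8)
The plan is to read a quantifier elimination off the explicit structure of the models of $T_{\o+\o}$, deduce stability from a cardinality count, and verify NDOP and NOTOP from the resulting (essentially trivial) forking calculus. First I would write down the evident axioms — each $E_\a$ is an equivalence relation, $E_\a$ refines $E_\b$ whenever $\b<\a<\o+\o$, $E_0$ is the full relation, and every $E_\a$-class with $\a<\o+\o$ meets at least $n$ distinct $E_{\a+1}$-classes for each $n<\o$ — and check, by a routine back-and-forth between $\aleph_1$-saturated models, that this axiomatization is complete (so it really is $T(\A)=T_{\o+\o}$) and eliminates quantifiers in the vocabulary $(E_\a)_{\a<\o+\o}$; the ``infinite splitting'' axioms make every existential formula over a finite tuple equivalent to a quantifier-free one. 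Consequently a complete type over a set $B$ is just a consistent quantifier-free type over $B$: thinking of $B$ as generating a tree of height $\o+\o$ (its $E_\a$-classes ordered by reverse inclusion), a $1$-type either follows one branch of this tree all the way — possibly landing in, or only approaching, an element of $B$ — or branches off it at some level $\a^{*}\le\o+\o$, carrying only finitely much further data at the levels above $\o$.

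Stability then follows by counting. Let $M\models T_{\o+\o}$ with $|M|=\l$ and $\l^{\aleph_0}=\l$. The $1$-types that branch off $M$ at a finite level number at most $\aleph_0\cdot\l$; those branching off at level $\o$ number at most the number of branches through the first $\o$ levels of the tree of $M$, which is at most $\l^{\aleph_0}=\l$; and those that agree with $M$ up to level $\o$ or beyond number at most $\l\cdot\aleph_0$. The same bounds apply to $n$-types, so $T_{\o+\o}$ is $\l$-stable whenever $\l^{\aleph_0}=\l$, in particular stable. (The same count shows $T_{\o+\o}$ is \emph{not} superstable, because a model of size $\beth_\o$ containing a copy of the tree $\beth_\o^{<\o}$ below level $\o$ carries $\beth_\o^{\aleph_0}>\beth_\o$ many $1$-types; this is not part of the Fact, but it is exactly what makes $\ISO(\k,T_{\o+\o})$ complicated later on.)

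For NDOP and NOTOP the forking calculus read off the above is transparent. For $A\subseteq B$ and $p\in S(B)$, $p$ does not fork over $A$ precisely when the level at which the branch data of $p$ leaves the tree of $B$ already equals the level at which it leaves the tree of $A$; hence the only dividing lines are the tree levels, the pregeometry is trivial, every nonalgebraic type is nonorthogonal to a one-step branching type $q_{a,\a}$ (``$E_\a$-equivalent to $a$ but in a fresh $E_{\a+1}$-class''), these $q_{a,\a}$ are the regular types, and $q_{a,\a}$ is nonorthogonal to $q_{b,\b}$ iff $\a=\b$ and $a\mathrel{E_\a}b$. For NDOP: if $M_0\prec M_1,M_2$ with $M_1$ and $M_2$ independent over $M_0$, then by the quantifier-free analysis $M_1$ and $M_2$ share no $E_\a$-class outside $M_0$, so in the prime model $M$ over $M_1\cup M_2$ every element realizes over $M_1\cup M_2$ a type hanging off a node already present in $M_1$ or in $M_2$; thus $M$ is dominated by $M_1\cup M_2$ over $M_0$ and carries no regular type orthogonal to both $M_1$ and $M_2$, which is NDOP. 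For NOTOP: by quantifier elimination the consistency of a finitary partial type $r(\bar x,\bar a,\bar b)$ — hence whether it can be realized in \emph{some} model over $\bar a\bar b$ — depends only on the quantifier-free type of $(\bar a,\bar b)$, so the realizability of a fixed $r(\bar x,\bar a_i,\bar a_j)$ cannot be made to track an arbitrary symmetric relation $R(i,j)$, as OTOP would require.

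I expect the main obstacle to be carrying out the NDOP and NOTOP verifications strictly inside Shelah's framework (regular types, domination, $a$-prime models over independent systems, the precise formulation of OTOP), rather than the forking calculus, which is trivial; the quantifier elimination is standard but must be set up with a little care, in particular noting that in elementary extensions of $\A$ one may have $\Cap_{n<\o}E_n\supsetneq E_\o$, which is precisely what costs superstability. Since $T_{\o+\o}$ is a textbook ``tree'' example, one may alternatively just cite \cite{Sh} (or the discussion of this theory in \cite{FHK}).
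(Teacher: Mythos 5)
The paper gives no proof of this Fact at all: it is stated as citable folklore ($T_{\o+\o}$ is the canonical stable, unsuperstable, NDOP, NOTOP example running through \cite{HS1}, \cite{HT} and \cite{FHK}), so there is no argument in the text to compare yours against. Your route is the standard one, and most of it is sound: the axiomatization and quantifier elimination via back-and-forth between $\aleph_1$-saturated models is correct (including your caveat that $\Cap_{n<\o}E_n$ may properly contain $E_\o$ off the standard model), and the count giving $\l$-stability whenever $\l^{\aleph_0}=\l$ is right and yields stability. One small inaccuracy in the forking picture: not every nonalgebraic type is nonorthogonal to a one-step branching type $q_{a,\a}$ — the types that follow a branch through all finite levels (or through all of $\o+\o$) without landing on a class of the base form their own nonorthogonality classes. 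This does not break your NDOP sketch, since such a branch still lives inside $M_1$ or $M_2$, but the NDOP verification genuinely has to be run with $F^a_{\aleph_1}$-prime models (not prime models), as $T_{\o+\o}$ is not superstable; you flag this, and it is where the actual work is.

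The one step that does not work as written is NOTOP. You argue that whether $r(\bar x,\bar a_i,\bar a_j)$ ``can be realized in some model over $\bar a_i\bar a_j$'' depends only on the quantifier-free type of the pair, and conclude that realizability cannot track an arbitrary relation $R(i,j)$. But OTOP does not ask for realizability-in-principle to vary with $(i,j)$: it asks for a \emph{single} model in which a fixed consistent type is realized for $R$-pairs and omitted for non-$R$-pairs, typically with all $\bar a_i$ realizing the same type — so consistency being constant across pairs is precisely the situation in which OTOP could occur, and your sentence excludes nothing. What is needed is an omitting-types analysis: by quantifier elimination, any joint information a realization of $r(\bar x,\bar a_i,\bar a_j)$ carries about the pair is funnelled through the meet of $\bar a_i$ and $\bar a_j$ in the tree of $E_\a$-classes (since $E_\a(x,a)\land E_\a(x,b)$ forces $E_\a(a,b)$), so $r$ decomposes into conditions on $\bar x\bar a_i$ and on $\bar x\bar a_j$ separately plus conditions on the pair alone; one then checks that a model realizing the types attached to the $R$-pairs cannot omit those attached to the non-$R$-pairs. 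Since the paper itself treats the Fact as known, the economical fix is simply to cite \cite{Sh} or the discussion of $T_{\o+\o}$ in \cite{HT}; if you want a self-contained proof, the NOTOP clause is the part that still needs to be written.
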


\begin{Fact}[\cite{Hyt1}]\label{fact}
  Suppose that $\k=\l^+$, $\l$ is regular and $\k\in I[\k]$. (Here $I[\k]$ is the ideal of the approachable sets
  introduced by S. Shelah, for reference see for example~\cite{HHR}. For our purposes this assumption is weak, because
  $\k\in I[\k]$ holds for all $\k$ with $\k=\l^+$ and $\l^{<\l}=\l$.)
  If $t$ and $t'$ are elements of $K(\k^+,\o+\o+2)$
  and player $\PlTwo$ has a winning strategy
  in the Ehrenfeucht-Fra\"iss\'e game $\EF^\k_{\l\cdot (\o+\o+1)}(t,t')$, then $t\cong t'$.
  An earlier, less general version of this theorem along with a proof can be found in~\cite{HS1}.
  Another version can also be found in~\cite{HT}.
\end{Fact}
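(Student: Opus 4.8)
This is a known result, and the plan is to follow the construction of \cite{HS1} (with the refinements of \cite{HT,Hyt1}); here I only indicate its shape. The feature of $K(\k^+,\o+\o+2)$ that drives everything is that all branches have order type $<\o+\o+2$, so each such tree carries a well-founded isomorphism invariant of rank $\le\o+\o+1$: recursively, the invariant of a leaf is trivial, and the invariant of a node records, for each invariant $\tau$, how many of its immediate successors (a cardinal $\le\k$) realize $\tau$. Two trees in this class are isomorphic precisely when their roots carry the same invariant, and the clause of Definition~\ref{def:trees} that every node has a leaf above it guarantees that no invariant occurring is degenerate. The bound $\l\cdot(\o+\o+1)$ is then $\l$ moves for each of the $\o+\o+1$ levels, which is exactly what is needed to transfer such an invariant one level at a time.

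I would build the isomorphism by a recursion of length $\k$ along a filtration $t=\bigcup_{\d<\k}u_\d$ of $t$ and a filtration $t'=\bigcup_{\d<\k}u'_\d$ of $t'$ into pieces of size $<\k$, maintaining an increasing chain of partial isomorphisms whose domains and ranges will exhaust $t$ and $t'$. At successor stages the newly revealed part is bounded, and one extends the partial isomorphism by feeding these bounded data into $\PlTwo$'s winning strategy together with the level-by-level analysis above: within a block of $\l$ moves at level $i$, $\PlTwo$'s responses certify that the immediate successors appearing at level $i$ on the two sides can be matched so as to preserve the already transferred invariants, and a counting argument handles the case of $\k$-many successors realizing a fixed sub-invariant. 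The hypothesis $\k\in I[\k]$ enters at the limit stages: an approachability witness supplies a club of $\d$ of cofinality $\l$ at which the construction so far is ``captured'' by a bounded subset, which is what lets a strategy consisting of finitely many blocks of $\l$ moves keep pace with the $\k$-sized trees and lets the partial isomorphisms cohere through those limits.

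I expect the genuine difficulty — and the reason this needs \cite{HS1} rather than a back-of-the-envelope argument — to be precisely this synchronization: one must arrange the bookkeeping so that at every approachable $\d$ all of the (possibly $\k$-many) immediate successors at all of the $\o+\o$ internal levels get matched correctly from only $\o+\o$ blocks of $\l$ moves, with the cardinality-stabilization of the successor counts and the approachability sequence dovetailing. For the purposes of the present paper only the conclusion is used: the game of length $\l\cdot(\o+\o+1)$ suffices to decide isomorphism in $K(\k^+,\o+\o+2)$, hence, via Lemma~\ref{lemma:trTbired}, $\ISO(\k,T_{\o+\o})$ is $\Borel^*$.
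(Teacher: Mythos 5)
The paper does not actually prove this statement: it is quoted as a Fact from \cite{Hyt1} (with earlier versions in \cite{HS1,HT}), so there is no internal proof to compare yours against. I can, however, say that your sketch as written would not work, because its central claim is false. You assert that a tree in $K(\k^+,\o+\o+2)$ is determined up to isomorphism by the well-founded invariant that records, recursively, how many \emph{immediate} successors of a node realize each sub-invariant. Nodes on the limit levels $\o$ and $\o+\o$ are not immediate successors of anything (their predecessors form a chain with no maximum), so this recursion never reaches them: the invariant sees only the first $\o$ levels and is completely blind to how the $\o$-chains and $(\o+\o)$-chains are capped. That is exactly where all the content of this class of trees lives. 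Indeed, the trees produced in Lemma~\ref{lem:StoJS} and Theorem~\ref{thm:Complete2} (the colored trees $J_f$ and their images in $K(\k^+,\o+\o+2)$) are built so that every node at every level $<\o+\o$ has the same successor pattern, yet they are non-isomorphic whenever $f\not\sim g$ --- the distinction is a stationarity phenomenon about which limit branches get leaves, and under $V=L$ the resulting isomorphism relation is $\Sii$-complete. A well-founded invariant of the kind you describe cannot witness this, so "same invariant at the root iff isomorphic" fails, and the rest of your argument (transferring the invariant level by level with $\l$ moves per level) has nothing to transfer at the limit levels.

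The genuine proof (in \cite{HS1,Hyt1}) instead runs a back-and-forth of length $\k$ directly against $\PlTwo$'s strategy in $\EF^\k_{\l\cdot(\o+\o+1)}(t,t')$, using $\k\in I[\k]$ to find, at limit stages of cofinality $\l$, approachable points at which the $\l$-length blocks of the game can be made to enumerate the relevant initial pieces of $t$ and $t'$ cofinally, so that the partial isomorphisms built from positions of the strategy cohere. Your description of the role of approachability is roughly the right flavor, but it has to be coupled to the game itself rather than to a level-by-level invariant; the hard case is precisely matching the limit-level nodes, which your invariant discards. Since the statement is used here only as a black box (to conclude via Lemma~\ref{lemma:trTbired} that $\ISO(\k,T_{\o+\o})$ is $\Borel^*$), citing \cite{Hyt1} is the appropriate course, but the sketch you give should not be presented as a summary of that proof.
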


\begin{Lemma}\label{lemma:trTbired}
  The relations $\ISO(K(\k^+,\o+\o+2))$ and $\ISO(\k,T_{\o+\o})$ are continuously bireducible
  to each other
  and Fact \ref{fact} holds with $K(\k^+,\o+\o+2)$ replaced by $\M(T_{\o+\o})$.
\end{Lemma}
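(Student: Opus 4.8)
The plan is to pass between a model $\A$ of $T_{\o+\o}$ and the tree $t(\A)$ of its equivalence classes, and to show that $\A\mapsto t(\A)$ and an inverse map $\Phi$ are mutually inverse continuous reductions between $\ISO(\k,T_{\o+\o})$ and $\ISO(K(\k^+,\o+\o+2))$; the transfer of Fact~\ref{fact} then follows because both maps come from parameter‑free interpretations, so EF‑equivalence of length $\l\cdot(\o+\o+1)$ transfers along them. Given $\A\models T_{\o+\o}$ with domain $\k$, let the nodes of $t(\A)$ of height $\a$, for $\a\le\o+\o$, be the equivalence classes of the $\a$‑th relation of $\A$ — for $\a<\o+\o$ this is $E_\a$, and for $\a=\o+\o$ it is equality — ordered so that a height‑$\b$ node lies above a height‑$\a$ node ($\a<\b$) iff the former is contained in the latter. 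Conversely, given $t\in K(\k^+,\o+\o+2)$, let $\Phi(t)$ be the structure whose domain is the set of leaves of $t$ (the maximal nodes, which by the definition of $K$ are exactly the nodes of height $\o+\o$), with $E_\a(y,y')$ declared to hold iff $y$ and $y'$ have the same ancestor of height $\a$.

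That $t(\A)\in K(\k^+,\o+\o+2)$ follows because $T_{\o+\o}$ proves that every $E_\a$‑class splits into infinitely many $E_{\a+1}$‑classes and that $E_\o\subseteq E_n$ for all $n<\o$, together with the triviality that every class is nonempty (so every node has a leaf above it); there are only $\o+\o+1$ heights, each realized by at most $|\A|=\k$ classes, and the height‑$(\o+\o)$ nodes are the singletons, so $|t(\A)|=\k$. For $\Phi$ the cardinality point is that every $t\in K(\k^+,\o+\o+2)$ of size $\k$ has exactly $\k$ leaves: for each height $\a$ the map taking a leaf to its height‑$\a$ ancestor is surjective, so the number of leaves is at least the number of height‑$\a$ nodes for every $\a$; since $\k$ is regular and there are only $\o+\o+1$ heights, some height is realized by $\k$ nodes, whence $\Phi(t)$ has domain of size $\k$. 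One then checks that the relations $E_\a$ on the leaves of $t$ satisfy the standard complete axiomatization of $T_{\o+\o}$ (nested equivalence relations with $E_\o\subseteq E_n$ for all $n<\o$, each class splitting into infinitely many classes at the next level); the only clause requiring a word is $E_\o\subseteq E_n$, which holds since a height‑$\o$ ancestor determines the ancestor of every smaller finite height — note that $K$ does not force $E_\o=\Cap_n E_n$ on $\Phi(t)$, but $T_{\o+\o}$ does not require this either, so this is harmless. Both maps are reductions: isomorphisms transport forward by functoriality, and for the converse one uses $\Phi(t(\A))\cong\A$ (identify each $a$ with the leaf it determines) and $t(\Phi(t))\cong t$ (send a node $v$ of $t$ to the set of leaves of $t$ above $v$; this is a height‑preserving bijection onto the nodes of $t(\Phi(t))$, because distinct nodes of $t$ of the same height are incomparable and hence lie below disjoint nonempty sets of leaves). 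Continuity of $\Phi$ and of $t(-)$ is verified exactly as for the constructions in the previous sections.

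For the second assertion, observe that $t(\A)$ is interpreted in $\A$ and $\Phi(t)$ in $t$, both without parameters and so that each element of the interpreted structure is coded by a single element of the interpreting structure (up to a parameter‑free equivalence). Hence a winning strategy of $\PlTwo$ in $\EF^\k_{\l\cdot(\o+\o+1)}(\A,\A')$ converts into a winning strategy of the same length in $\EF^\k_{\l\cdot(\o+\o+1)}(t(\A),t(\A'))$: whenever $\PlOne$ plays a move in the tree game, $\PlTwo$ reads off the underlying domain elements, applies the given strategy on the $\A$‑side, and answers with the nodes of the same heights determined by the resulting elements of $\A'$. By Fact~\ref{fact} this gives $t(\A)\cong t(\A')$, and therefore $\A\cong\Phi(t(\A))\cong\Phi(t(\A'))\cong\A'$, which is the required version of Fact~\ref{fact} for $\M(T_{\o+\o})$.

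The bulk of the work is bookkeeping rather than ideas: simultaneously forcing the domain of $\Phi(t)$ to have size exactly $\k$ (the level‑counting argument, which uses the regularity of $\k$), checking that $\Phi(t)$ genuinely satisfies the complete theory $T_{\o+\o}$ and not merely its evident universal consequences, and making sure that the EF‑game length $\l\cdot(\o+\o+1)$ is preserved verbatim under the interpretation rather than only up to a harmless multiplicative constant. I expect these three points to be where all the care is needed; the rest is routine.
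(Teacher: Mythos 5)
Your proposal is correct and follows essentially the same route as the paper: the model-on-leaves map and the tree-of-equivalence-classes map are exactly the paper's $F$ and $G$, the identity $t(\Phi(t))\cong t$ is the paper's observation $G(F(t))=t$, and the transfer of Fact~\ref{fact} via simulating the tree EF-game inside the model EF-game of the same length is the paper's closing argument. The points you flag as needing care (the leaf-counting, verifying the complete theory, and preserving the game length exactly) are the right ones, and the paper handles them at a comparable level of detail.
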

\begin{proof}
  Suppose $t\in K(\k^+,\o+\o+2)$. Let $M$ be the set of all leaves of $t$, i.e. elements on level
  $\o+\o$. For all $\a<\o+\o$ and $a,b\in M$, set $(a,b)\in E_\a^M$ if and only if there exists $x\in t$
  on level $\a$ with $x<a$ and $x<b$. Then $t\mapsto M$ defines a map 
  $$F\colon K(\k^+,\o+\o+2)\to \M(T_{\o+\o}).$$
  Clearly if trees $t$ and $t'$
  are isomorphic, then so are $F(t)$ and $F(t')$. Suppose $F(t)$ and $F(t')$ are isomorphic
  via the isomorphism~$g\colon F(t)\to F(t')$.
  This induces a bijection $f$ from the leaves of $t$ to the leaves of $t'$ which preserves the 
  the pairwise splitting levels of the branches. If $a$ and $b$ are leaves, denote by $s(a,b)$
  the smallest $\a$ such that there is no $x$ on level $\a$ with $x<a$ and $x<b$.
  Then the above can be written $s(f(a),f(b))=s(a,b)$ $(*)$.
  Now we can extend $f$
  to the whole tree using this information as follows. Let $x\in t$ and let $a$ be any leaf above $x$.
  Such exists by the definition of $K(\k^+,\o+\o+2)$. Let $f(x)$ be the unique element in $t'$ below
  $f(a)$ which is on the same level as $x$ is in $t$. Then $f$ is well defined: if $a$ and $b$ are
  two different leaves above $x$, then $s(a,b)>\a$, where $\a$ is the level of $x$, so by $(*)$ we have
  $s(f(a),f(b))>\a$ and so $f(x)$ is independent on which branch is used.




  Deciding whether an element $x\in t$ is a leaf or not requires only countable information
  and that is why the described reduction is continuous.  

  Let us now define a continuous reduction $G$ from $T_{\o+\o}$ to $K(\k^+,\o+\o+2)$.
  Suppose $M$ is a model of $T_{\o+\o}$
  Let $E_{\o+\o}^M$ the identity relation on $\dom M$: $(a,b)\in E_{\o+\o}$ if and only
  $a=b$. Let
  $G(M)=\Cup_{\a\le \o+\o}\dom M/E_\a$ and for $x,y\in G(M)$
  let $x<y$, if $y\subset x$. 
  Clearly $G(M)\in K(\k^+,\o+\o+2)$ and in fact
  $G(F(t))=t$ for all $t\in K(\k^+,\o+\o+2)$ which implies the rest.

  Using this construction it is easy to see that if player $\PlTwo$ has a winning strategy in
  $\EF^\k_{\l\cdot (\o+\o+1)}(M,M')$ for $M,M'\in \M(T_{\o+\o})$, then
  she has a winning strategy also in the game $\EF^\k_{\l\cdot(\o+\o+1)}(G(M),G(M'))$. So
  if this implies that $G(M)\cong G(M')$ (and it does under the assumptions of Fact~\ref{fact}),
  then it also implies~$M\cong M'$.
\end{proof}

\begin{Cor}[$V=L$, $\k=\l^+$, $\l^\o=\l$]\label{thm:Stable1}
  $\ISO(\k,T_{\o+\o})$ is $\Sii$-complete.
\end{Cor}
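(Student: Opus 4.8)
The plan is to obtain the corollary by composing reductions already constructed. First note that $\ISO(\k,T_{\o+\o})$ is itself a $\Sii$ equivalence relation: $T_{\o+\o}$ is a complete first-order theory, so $C(\Str^\k(T_{\o+\o}))$ is Borel and $\ISO(\k,T_{\o+\o})$ is $\Sii$, as recorded after the definition of the class $S_\k$. Hence it suffices to check that every $\Sii$ equivalence relation Borel-reduces to it.

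So let $E$ be an arbitrary $\Sii$ equivalence relation on some $X\subseteq\k^\k$. Since the hypotheses $V=L$, $\k=\l^+$, $\l^\o=\l$ of the corollary are precisely those of Theorem~\ref{thm:Complete2}, that theorem gives a Borel reduction witnessing $E\le_B\ISO(K(\k^+,\o+\o+2))$; this reduction is the composite of the continuous map of Theorem~\ref{thm:Complete1}, the map $J$ of Lemma~\ref{lem:StoJS}, and the continuous colour-elimination map of the proof of Theorem~\ref{thm:Complete2}. Lemma~\ref{lemma:trTbired} then supplies a continuous reduction $F$ witnessing $\ISO(K(\k^+,\o+\o+2))\le_B\ISO(\k,T_{\o+\o})$, namely the map sending a tree $t$ to the $T_{\o+\o}$-model on the set of leaves of $t$ with $E_\a$ interpreted via the $\a$-th splitting level of branches. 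Composing, and using that the Borel functions are closed under composition and contain the continuous ones, we get a Borel reduction $E\le_B\ISO(\k,T_{\o+\o})$. As $E$ was arbitrary, $\ISO(\k,T_{\o+\o})$ is $\Sii$-complete.

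There is no genuine obstacle in this argument; it is a packaging of Theorems~\ref{thm:Complete1} and~\ref{thm:Complete2}, Lemma~\ref{lem:StoJS} and Lemma~\ref{lemma:trTbired}. The only steps worth a line of verification are that the cardinal-arithmetic side conditions of the cited statements (in particular $\k^{<\k}=\k$, used in Theorem~\ref{thm:Complete1} and Lemma~\ref{lem:StoJS}) do follow from $\k=\l^+$, $\l^\o=\l$ together with GCH, which holds in $L$, and that the composition of the reductions is again a legitimate reduction. One may also remark, beyond the bare statement of the corollary, that combining Fact~\ref{fact}, Lemma~\ref{lemma:trTbired} and the translation between Ehrenfeucht-Fra\"iss\'e games and $\Borel^*$-games shows $\ISO(\k,T_{\o+\o})$ to be $\Borel^*$ as well, so that it is precisely the counterexample to the thesis of~\cite{FHK} promised in the introduction.
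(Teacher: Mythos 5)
Your argument is correct and is essentially the paper's own proof: the corollary is obtained exactly by composing Theorem~\ref{thm:Complete2} with the continuous bireduction of Lemma~\ref{lemma:trTbired}, and your additional checks (that $\ISO(\k,T_{\o+\o})$ is itself $\Sii$ and that the cardinal-arithmetic hypotheses are met in $L$) are harmless elaborations of the same route. Nothing further is needed.
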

\begin{proof}
  By the lemma above it is sufficient to look at the trees in the class $K(\k^+,\o+\o)$
  and the result follows from Theorem~\ref{thm:Complete2}.
\end{proof}

A similar result to Theorem \ref{thm:Complete4} for computable reductions has been observed in~\cite{FFKMM}:

\begin{Thm}[ZFC, $\k^{<\k}=\k>\o$]\label{thm:Complete4}
  Let $\dlo$ be the theory of dense linear orderings without end points.
  Then $\ISO(\k,\dlo)$ is $S_\k$-complete.
\end{Thm}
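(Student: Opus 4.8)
The plan is to show that every equivalence relation $E \in S_\k$ Borel-reduces to $\ISO(\k,\dlo)$. By definition of $S_\k$, $E = \ISO(M)$ for some class $M$ of structures (in a countable vocabulary, universe $\k$) with $C(M)$ Borel, and $E$ is induced by the action of the full symmetric group $S_\k$ on $2^\k$. The strategy is classical: encode an arbitrary structure $\A$ with universe $\k$ as a dense linear order $L_\A$ in such a way that $\A \cong \B$ if and only if $L_\A \cong L_\B$, and so that the map $\A \mapsto L_\A$ is continuous (hence Borel). The construction is the usual ``rigid gadget'' trick adapted to the uncountable setting: replace each atomic relationship of $\A$ by a linear-order block whose isomorphism type is a canonical invariant, string the blocks together along a $\k$-indexed backbone with marker blocks separating the coordinates, and then densify the whole thing by inserting copies of $\Q$ so the result lands in $\dlo$. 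Restricting to $C(M)$ gives the reduction $E \le_B \ISO(\k,\dlo)$; since $E$ was arbitrary in $S_\k$ and $\ISO(\k,\dlo)$ itself lies in $S_\k$ (its domain $C(\Str^\k(\dlo))$ is Borel), this proves $S_\k$-completeness.

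First I would fix a bijection $\pi$ between $\k$ and the set of atomic formulas of the vocabulary with parameters in $\k$ (as in the coding-of-models definition), and build, for each $\a<\k$, a ``coordinate block'' $B_\a(\eta)$: a finite linear order whose order type depends only on $\eta(\a) \in \{0,1\}$ (say one point versus two points), flanked by an ``address tag'' — a block of order type $\a$-many, or better a rigidly distinguishable finite pattern coding $\a$ in binary via $\Z$-like segments — so that an isomorphism must send coordinate $\a$ to coordinate $\a$. Then set $L'_\eta = \sum_{\a<\k} (\text{tag}_\a + B_\a(\eta))$ in the order $\k$. The point is that an isomorphism $L'_\eta \to L'_\xi$ is forced (by the tags, which are the only cofinal-free rigid anchors) to respect the decomposition and hence to be determined by a permutation of $\k$ carrying $\eta$ to $\xi$ atomic-formula-wise; conversely a permutation of $\k$ witnessing $\A_\eta \cong \A_\xi$ moves the blocks around to produce an order isomorphism, because the tag for address $\a$ only records $\a$ and matching atomic data get matched. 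Finally, to land inside $\dlo$, replace each point of $L'_\eta$ by a copy of $\Q$ and insert a copy of $\Q$ strictly between every two consecutive points; density and endpoint-freeness are then immediate, and the densification is ``rigid enough'' — the standard fact that $A + 1 + B \not\cong A + 1 + B$ detection survives because the singleton markers become the unique non-$\Q$-homogeneous gaps — so isomorphism type is preserved in both directions. Continuity of $\eta \mapsto$ (code of $L_\eta$) holds because the restriction of $L_\eta$ below the first $\a$ coordinate blocks depends only on $\eta \restl \pi[\a]$, a small piece of $\eta$.

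The honest difficulty is the rigidity bookkeeping in the uncountable setting. In the classical ($\k = \o$) Friedman–Stanley style construction one uses $\Z$-chains and carefully chosen finite patterns to make the backbone rigid and the address tags unambiguous; here the backbone has length $\k$, so I must ensure (i) the ``tag'' blocks are pairwise non-isomorphic and individually rigid, i.e. an order isomorphism cannot slide one tag onto another or onto part of a $B_\a$ or a $\Q$-filler, and (ii) no unwanted isomorphism arises from the fact that cofinally many coordinates may carry the same atomic value — this is exactly the place where one needs the address tag to pin down $\a$ absolutely, not just up to the pattern of $\eta$ near $\a$. The cleanest fix is to let $\mathrm{tag}_\a$ be a linear order of order type $2 + \a$ (two special points followed by an $\a$-chain), or a lexicographic gadget as in Claim 1 of Theorem~\ref{thm:Complete3}; then ``has exactly $\a$ predecessors within its tag-segment'' is an order-invariant property, so any isomorphism is level-preserving on tags and the whole argument goes through. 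I expect verifying this rigidity — and checking that the final $\Q$-densification does not collapse the distinctions — to be the main obstacle; everything else (continuity, the two directions of the biconditional, membership of $\ISO(\k,\dlo)$ in $S_\k$) is routine.
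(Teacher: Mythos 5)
Your construction has a fatal tension at its core, and the ``fix'' you propose in the last paragraph makes it worse rather than better. The relation you must reduce is $\ISO(M)$, i.e.\ the orbit equivalence of the action of the permutation group of $\k$ on codes: $\eta$ and $\xi$ are equivalent when \emph{some} permutation $p$ of the universe $\k$ carries $\A_\eta$ to $\A_\xi$. Such a $p$ induces a nontrivial permutation $p_*$ of the coordinate set (the codes of atomic formulas), so in the forward direction the block sitting at address $\a$ in $L'_\eta$ must be matched with the block at address $p_*(\a)\ne\a$ in $L'_\xi$. But your tags are designed precisely so that $\mathrm{tag}_\a\not\cong\mathrm{tag}_\b$ for $\a\ne\b$ and so that ``any isomorphism is level-preserving on tags,'' i.e.\ sends coordinate $\a$ to coordinate $\a$. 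With such tags, $L'_\eta\cong L'_\xi$ forces $B_\a(\eta)\cong B_\a(\xi)$ for every $\a$, hence $\eta=\xi$: you have reduced the \emph{identity} relation on $C(M)$ to $\ISO(\k,\dlo)$, not $\ISO(M)$; and conversely, $\A_\eta\cong\A_\xi$ via a nontrivial $p$ gives no isomorphism of your orders at all, so the ``if'' direction of your biconditional fails outright. Dropping or weakening the tags does not help either: then an order isomorphism only yields an arbitrary permutation of atomic-formula codes, which need not be induced by any permutation of the universe, so unwanted identifications appear. What is needed is an encoding that is \emph{equivariant} under permutations of the universe while still allowing the structure to be recovered from the abstract order type; this is genuinely hard and is exactly the content of the Friedman--Stanley theorem, not a piece of ``rigidity bookkeeping.''

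The paper's proof sidesteps all of this by citing Friedman--Stanley: the isomorphism relation of binary structures (countable there, but the same proof works for size $\k$) reduces to that of linear orderings, and any $\ISO(M)$ with $C(M)$ Borel reduces to isomorphism of binary structures; one then densifies by replacing each point of the resulting linear order by $\eta+\Q+\eta$, where $\eta$ is the saturated $\dlo$ of size $\k$ and $\Q$ the countable one. Your densification step (making each original point the unique non-homogeneous marker inside a dense sea) is in the right spirit and is the easy half of the argument; the missing idea is the permutation-equivariant coding of arbitrary structures into linear orders, which your address-tag scheme cannot supply.
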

\begin{proof}  
  It was proved in \cite{FrSt} that the isomorphism relation
  on all countable binary structures is reducible to countable linear orderings.
  The same proof works for $\k>\o$. 
  Then we embed all linear orders into dense linear orders
  by replacing each point by the ordering $\eta+\Q+\eta$, where $\eta$ is the saturated $\dlo$ of
  size $\k$ and $\Q$ is the countable saturated~$\dlo$.
\end{proof}

\begin{Thm}[\cite{HK}]\label{thm:DLONotBorelSt}
  Suppose $\k^+=2^{\k}$ and $\k^{<\k}=\k$. Then there exists a $<\k$-closed, $\k^+$-c.c. forcing
  $\P$ which forces that $\ISO(\k,\dlo)$ is not $\Borel^*$ and at 
  the same time $\Dii\subsetneq \Borel^*$. \qed
\end{Thm}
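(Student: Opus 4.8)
The plan is to build $\P$ as a $<\k$-closed, $\k^+$-c.c. iteration with $<\k$-supports and length $2^\k=\k^+$, whose iterands diagonalise against every potential $\Borel^*$-code for the relation $\ISO(\k,\dlo)$ (viewed on $2^\k\times 2^\k$ via the pairing), while also arranging that $\Borel^*$ stays strictly above $\Dii$. First I would verify the combinatorial side: each iterand is $<\k$-closed, and a $<\k$-support iteration of $<\k$-closed iterands stays $<\k$-closed; the $\k^+$-c.c. follows from a $\Delta$-system argument using $\k^{<\k}=\k$. From $<\k$-closure one gets that $\P$ adds no new bounded subsets of $\k$, preserves all cardinals $\le\k^+$ and cofinalities, and preserves $\k^{<\k}=\k$; hence in $V[G]$ the space $\k^\k$ and the classes $\Borel,\Dii,\Borel^*,\Sii$ are set up exactly as in $V$. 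Moreover, since a $\Borel^*$-code is (coded by) a set of size $\le\k^{<\k}=\k$, and $\k^+$-c.c. means any such object depends on $\le\k$ many coordinates — a set of size $<\k^+=\cf(\k^+)$ — every $\Borel^*$-code in $V[G]$ already lives in an intermediate model $V[G\!\restriction\! S]$ determined by $\le\k$ coordinates, leaving cofinally many ``fresh'' coordinates available.

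The heart of the argument is the iterand that handles a name $\dot c=(\dot t,\dot f)$ for a $\Borel^*$-code: it should add a pair of dense linear orders $L_0,L_1$ of size $\k$ on which the code is \emph{wrong}, i.e. for which the truth value of $L_0\cong L_1$ disagrees with ``$\PlTwo$ has a winning strategy in $B^*((\eta_{L_0},\eta_{L_1}),\dot c)$''. Because $\ISO(\k,\dlo)$ is $S_\k$-complete already in $\ZFC$ (Theorem~\ref{thm:Complete4}), it suffices to diagonalise against the $S_\k$-complete isomorphism relation as realised by generic DLOs. In the intermediate model the set $Y_{\dot c}=\{(\eta,\xi):\PlTwo\text{ wins }B^*((\eta,\xi),\dot c)\}$ is $\Sii$; if $Y_{\dot c}$ fails to contain some pair of codes of isomorphic DLOs we are already done, and otherwise we use a fresh coordinate to force a pair $L_0\cong L_1$ for which $\PlTwo$ has no winning strategy in the associated game. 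The delicate point — and the step I expect to be the main obstacle — is \emph{persistence}: ``$\PlTwo$ has no winning strategy'' is a $\Pi^1_1$-type assertion, and a winning strategy is a set of size $\le\k$ that a $<\k$-closed tail forcing could in principle add; likewise, in the variant where we use a non-isomorphic pair landing in $Y_{\dot c}$, the tail could add an isomorphism. So the iterands must be designed to be sufficiently homogeneous that the relevant game outcome is decided outright, and the two DLOs must be built so that the disagreement sits on the absolute side of each statement (an isomorphism, once present, is never destroyed; a forced winning strategy for $\PlOne$ stays winning). Running this over all $\k^+$ names, with the bookkeeping made legitimate by the $\k^+$-c.c., yields that in $V[G]$ no $\Borel^*$-code computes $\ISO(\k,\dlo)$, so $\ISO(\k,\dlo)\notin\Borel^*$.

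For the second conclusion, $\Dii\subsetneq\Borel^*$ in $V[G]$, one uses that $\Borel\subseteq\Borel^*$ together with the fact that the above diagonalisation has kept $\Borel^*$ a proper subclass of $\Sii$: the construction is arranged so that on the ``small'' side a suitable $\Borel^*$ set is preserved that is not $\Dii$ — for instance a $\Borel^*$-complete set, or a $V=L$-witness of $\Dii\subsetneq\Borel^*$ whose code survives and whose complement is prevented by the same genericity from becoming $\Sii$ (note a $\Borel^*$-complete set that were $\Dii$ would force $\Dii=\Borel^*$, hence $\ISO(\k,\dlo)\in\Borel^*$, contradicting the first part). Thus $\P$ is engineered to separate $\Borel^*$ strictly from $\Dii$ below and from $\Sii$ above; the truly load-bearing part of the proof is the homogeneity/persistence analysis of paragraph two, which dictates the precise form of the iterands.
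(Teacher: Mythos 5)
First, a point of reference: this theorem is not proved in the paper at all --- it is quoted from \cite{HK} with an immediate \qed --- so there is no in-paper argument to compare yours against. Judged on its own, your proposal has the right outer shape (a $<\k$-closed, $\k^+$-c.c.\ iteration of length $\k^+$; reflection of each $\Borel^*$-code into an intermediate model via the chain condition; bookkeeping over the $\k^+$ many nice names), but the two load-bearing steps are not actually carried out.

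The first gap is the one you yourself flag and then defer. To kill a code $(t,f)$ \emph{permanently} you need one of two configurations: (a) a pair $L_0\cong L_1$ on which $\PlOne$ has a winning strategy --- both the isomorphism and $\PlOne$'s strategy are upward absolute under the $<\k$-closed tail (the tree acquires no new branches of length $<\k$ and the payoff at leaves is absolute), and $\PlOne$ winning excludes $\PlTwo$ winning forever; or (b) a pair $L_0\not\cong L_1$ on which $\PlTwo$ has a winning strategy, where the strategy persists but the \emph{non-isomorphism} of two dense linear orders of size $\k$ must be made indestructible under the remaining $<\k$-closed $\k^+$-c.c.\ forcing. Your proposal gives no argument that for an arbitrary code at least one of (a), (b) can be forced, and no mechanism for the indestructibility in (b) (one needs the non-isomorphism witnessed by an invariant the tail forcing cannot change, e.g.\ a stationarity-type invariant as in the proofs of Theorem~\ref{thm:Complete3}, not merely ``sufficient homogeneity''). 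This dichotomy and its preservation analysis \emph{is} the theorem; everything else in your sketch is standard scaffolding. Relatedly, the appeal to $S_\k$-completeness of $\ISO(\k,\dlo)$ does no work as stated: completeness says other relations reduce \emph{to} $\ISO(\k,\dlo)$, which would only help if you first produced some non-$\Borel^*$ relation reducing to it, and that is again the same problem.

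The second conclusion, $\Dii\subsetneq\Borel^*$, is not addressed: ``a suitable $\Borel^*$ set is preserved that is not $\Dii$'' names the goal rather than a construction. Moreover your parenthetical attempt to derive it from the first conclusion is a non sequitur: from $\Dii=\Borel^*$ you infer $\ISO(\k,\dlo)\in\Borel^*$, but there is no reason for $\ISO(\k,\dlo)$ to be $\Dii$ --- it is only known to be $\Sii$ --- so $\Dii=\Borel^*$ is perfectly compatible with $\ISO(\k,\dlo)\notin\Borel^*$. As it stands this clause of the theorem would have to be proved by a separate, explicit construction of a $\Borel^*$ set whose complement is forced not to be $\Sii$, interleaved with the diagonalization and itself preserved by the tail.
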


\begin{Cor}\label{cor:StableNotCom}
  Suppose $\k^+=2^\k$, $\k^{<\k}=\k=\l^+$ and $\l^{<\l}=\l$.
  Then there is a $<\k$-closed $\k^+$-c.c. forcing which forces that
  $\ISO(\k,T_{\o+\o})$
  is not $S_\k$-complete, in particular not $\Sii$-complete. 
\end{Cor}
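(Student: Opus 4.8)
The plan is to work inside the forcing extension $V^{\P}$ given by Theorem~\ref{thm:DLONotBorelSt}, which applies since $\k^{+}=2^{\k}$ and $\k^{<\k}=\k$. First I would record that $\P$, being $<\k$-closed and $\k^{+}$-c.c., preserves all cardinals and cofinalities and adds no new sequences of length $<\k$; hence in $V^{\P}$ we still have $\k^{<\k}=\k=\l^{+}$ and $\l^{<\l}=\l$, so $\l$ is still regular and, by the remark attached to Fact~\ref{fact}, $\k\in I[\k]$ continues to hold. By Theorem~\ref{thm:DLONotBorelSt}, in $V^{\P}$ the relation $\ISO(\k,\dlo)$ is not $\Borel^{*}$.

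The complementary half is that $\ISO(\k,T_{\o+\o})$ \emph{is} $\Borel^{*}$, and this survives the forcing because it is a ZFC consequence of the cardinal arithmetic just verified. Indeed, by Lemma~\ref{lemma:trTbired} together with Fact~\ref{fact} (whose hypotheses hold in $V^{\P}$ by the previous paragraph), for $M,M'\in\M(T_{\o+\o})$ player $\PlTwo$ has a winning strategy in $\EF^{\k}_{\l\cdot(\o+\o+1)}(M,M')$ if and only if $M\cong M'$ --- the forward direction is Fact~\ref{fact}, the backward one is trivial. Since $\l\cdot(\o+\o+1)<\k$, the standard translation between the Ehrenfeucht-Fra\"iss\'e game and the $\Borel^{*}$-game (recalled in the introduction, see \cite{MV,FHK}) turns this characterisation into a $\Borel^{*}$-code for $\ISO(\k,T_{\o+\o})$.

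Now I would combine the two halves. Both $\ISO(\k,\dlo)$ and $\ISO(\k,T_{\o+\o})$ belong to $S_\k$, since the set of codes of models of a first-order theory is Borel. If $\ISO(\k,T_{\o+\o})$ were $S_\k$-complete in $V^{\P}$, there would in particular be a Borel reduction $\ISO(\k,\dlo)\le_{B}\ISO(\k,T_{\o+\o})$; but $\Borel^{*}$ is closed under Borel reductions, so $\ISO(\k,\dlo)$ would be $\Borel^{*}$ in $V^{\P}$, contradicting the first paragraph. Hence $\ISO(\k,T_{\o+\o})$ is not $S_\k$-complete in $V^{\P}$ --- indeed $\ISO(\k,\dlo)\not\le_{B}\ISO(\k,T_{\o+\o})$ there. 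Finally, since $S_\k\subseteq\Sii$ and $\ISO(\k,T_{\o+\o})\in S_\k$, a $\Sii$-complete equivalence relation that lies in $S_\k$ is automatically $S_\k$-complete; so $\ISO(\k,T_{\o+\o})$ is not $\Sii$-complete in $V^{\P}$ either.

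I expect the only delicate point to be the middle step: checking that the EF-game characterisation of $\ISO(\k,T_{\o+\o})$ really is preserved by the forcing (which reduces to $\l$ remaining regular and $\k\in I[\k]$, both immediate from the preservation properties of $\P$), and that it genuinely yields a $\Borel^{*}$-code --- but the latter is a ZFC fact whose hypothesis here is just $\l\cdot(\o+\o+1)<\k$, and the downward closure of $\Borel^{*}$ under $\le_{B}$ is routine, so neither should present a serious obstacle.
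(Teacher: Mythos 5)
Your proposal is correct and follows essentially the same route as the paper's own proof: apply Theorem~\ref{thm:DLONotBorelSt}, note that the forcing preserves the hypotheses of Fact~\ref{fact} so that $\ISO(\k,T_{\o+\o})$ remains $\Borel^*$ via the $\EF$-game characterisation, and conclude via the downward closure of $\Borel^*$ under Borel reductions. The only difference is that the paper spells out that closure property explicitly (via extended $\Borel^*$-codes with Borel leaves) where you label it routine, and you add the (correct) observation that non-$S_\k$-completeness entails non-$\Sii$-completeness for relations in $S_\k$.
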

\begin{proof}
  By Theorem \ref{thm:DLONotBorelSt} there is a $<\k$-closed $\k^+$-c.c. forcing which forces
  that $\ISO(\k,\dlo)$ is not $\Borel^*$. This forcing preserves cardinals and preserves the
  fact $\l^{<\l}=\l$ and so also that $I[\k]$ is improper.
  Thus in the forced model the isomorphism of $T_{\o+\o}$
  can be characterized by the $\EF$-game of length $\l\cdot(\o+\o+1)$ by Fact~\ref{fact}.
  This implies that $\ISO(\k,T_{\o+\o})$ is $\Borel^*$ 
  which can be seen in the same way as the $\Leftarrow$-part of
  Theorem 4.68 in~\cite{FHK}. Therefore $\ISO(\k,\dlo)$ cannot be reduced to it because 
  a non-$\Borel^*$ relation cannot be Borel-reduced to a $\Borel^*$ equivalence relation.
  To see this let $f\colon \k^\k\to\k^\k$ be a Borel map and let $B\subset \k^\k$ be a 
  Borel* set. It is sufficient to show that the inverse image of $B$ is also Borel*.
  First note that in the definition of the Borel* sets the basic open sets can 
  be replaced by Borel sets the definition remaining equivalent; let us call such 
  Borel*-codes \emph{extended Borel*-codes}. So then take the
  Borel*-code $(t,h)$ of $B$ and let $(t,k)$ be an extended Borel*-code with the same tree
  $t$ and $k(b)=f^{-1}h(b)$ for all leaves $b$ and otherwise $k$ gets the same values as
  $h$. Now it is easy to see that $(t,k)$ is an extended Borel*-code for~$f^{-1}B$.
\end{proof}

\bibliography{ref}{}
\bibliographystyle{amsalpha}

\end{document}